\newtheorem{theorem}{Theorem}
\newtheorem{lemma}{Lemma}
\newtheorem{prop}{Proposition}
\title{The Landau-Kolmogorov Problem on a Finite Interval in the Taikov Case}
\author{dmitriy.skorokhodov }
\date{November 2020}
\begin{document}

\maketitle

\begin{abstract}
    We solve the pointwise Landau-Kolmogorov problem on the interval $\mathbb{I} = [-1,1]$ on finding $\left|f^{(k)}(t)\right|\to\sup$ under constraints $\|f\|_2 \leqslant \delta$ and $\left\|f^{(r)}\right\|_2\leqslant 1$, where $t\in\mathbb{I}$ and $\delta > 0$ are fixed. For $r = 1$ and $r = 2$, we solve the uniform version of the Landau-Kolmogorov problem on the interval $\mathbb{I}$ in the Taikov case by proving the Karlin-type conjecture $\sup\limits_{t\in \mathbb{I}}\left|f^{(k)}(t)\right| = \left|f^{(k)}(-1)\right|$ under above constraints. The proof relies on the analysis of the dependence of the norm of the solution to higher-order Sturm-Liouville equation $(-1)^ru^{(2r)} + \lambda u = -\lambda f$ with boundary conditions $u^{(s)}(-1) = u^{(s)}(1) = 0$, $s = 0,1,\ldots,r-1$, on non-negative parameter $\lambda$, where $f$ is some piece-wise polynomial function. Furthermore, we find sharp inequality $\left\|f^{(k)}\right\|_\infty \leqslant A\|f\|_2 + B\left\|f^{(r)}\right\|_2$ with the smallest possible constant $A > 0$ and the smallest possible constant $B = B(A)$ for $k \in \{r-2, r-1\}$. \\~\\
    {\sc Keywords: \it Landau-Kolmogorov problem, Kolmogorov-type inequalities, Stechkin problem, higher-order Sturm-Liouville problem. }\\~\\
    MSC2010 (primary): 41A17, 41A35, 26D10.\\
    MSC2010 (secondary): 34B24.
\end{abstract}

\section{Introduction}

The Landau-Kolmogorov problem consists of finding sharp upper bound for the norm of intermediate function derivative in terms of the norm of the function itself and the norm of its higher order derivative. 

Let $\mathbb{I} = [-1,1]$. For $1\leqslant p\leqslant\infty$, let $L_p = L_p(\mathbb{I})$ be the space of measurable functions $f:\mathbb{I}\to\mathbb{R}$ having integrable $p$-th power (essentially bounded when $p=\infty$) with the standard norm:
\[
    \|f\|_p := \|f\|_{L_p(\mathbb{I})} = \left\{\begin{array}{ll}
        \displaystyle \left(\int_{-1}^1|f(x)|^p\,{\rm d}x\right)^{\frac 1p},& 1\leqslant p < \infty, \\ 
        \textrm{esssup}\,\left\{|f(x)|\,:\,x\in\mathbb{I}\right\},& p=\infty
    \end{array}\right.
\]
Let $L_p^r$, $r\in\mathbb{N}$, be the space of functions $f:\mathbb{I}\to \mathbb{R}$ having absolutely continuous derivative $f^{(r-1)}$ and such that $f^{(r)}\in L_p$. By $W^r_p = \left\{f\in L_p^r\,:\,\|f\|_p\leqslant 1\right\}$ we denote the unit ball in space $L_p^r$.

Let $k,r\in\mathbb{Z}_+$ be such that $0\leqslant k \leqslant r-1$ and $1\leqslant p,q,s\leqslant \infty$. {\it The Landau-Kolmogorov problem} consists of finding the modulus of continuity of operator $D^k:L_p\to L_q$ on the class $W^r_s$:
\begin{equation}
\label{modulus}
    \Omega(\delta) = \Omega\left(\delta;D^k;W^r_s\right) = \sup{\left\{\left\|f^{(k)}\right\|_q\,:\,f\in W^r_s,\,\|f\|_p\leqslant\delta\right\}},\quad \delta\geqslant 0.
\end{equation}
%
In the case $q = \infty$ {\it the pointwise Landau-Kolmogorov problem} is also considered. It consists of finding the modulus of continuity of functional $D^k_t:L_p\to\mathbb{R}$, where $t\in\mathbb{I}$, on the class $W^r_s$: 
\begin{equation}
\label{modulus_pointwise}
    \Omega_t(\delta) = \Omega\left(\delta;D^k_t;W^r_s\right) = \sup{\left\{\left|f^{(k)}(t)\right|\,:\,f\in W^r_s,\,\|f\|_p\leqslant\delta\right\}},\quad \delta\geqslant 0.
\end{equation}


Problems~\eqref{modulus} and~\eqref{modulus_pointwise} are the most studied in the case of uniform norms $p=q=s=\infty$. S.~Karlin~\cite{Kar_76} and A.~Pinkus~\cite{Pin_78} found $\Omega_t$ 
for all $t\in \mathbb{I}$ and $k,r\in\mathbb{N}$ with $k\leqslant r-1$. In~\cite{Kar_76} S.~Karlin conjectured that, for every $\delta > 0$,  
\begin{equation}
\label{karlin_conjecture}
    \Omega(\delta) = \sup\limits_{t\in\mathbb{I}}\Omega_t(\delta) = \Omega_{-1}(\delta).
\end{equation}
Although not fully proved, conjecture~\eqref{karlin_conjecture} is confirmed in many situations (see {\it e.g.},~\cite{Sha_14}): for $r=2$ by E.~Landau~\cite{Lan_13} ($\delta < \delta_2$) and C.\,K.~Chui, P.\,W.~Smith~\cite{ChuSmi_75} ($\delta \geqslant \delta_2$); for $r=3$ by M.~Sato~\cite{Sat_82} and A\,I.~Zvyagintsev, A.\,Ya.~Lepin~\cite{ZvyLep_82} independently; for $r=4$ by A.\,I.~Zvyagintsev~\cite{Zvy_82} ($\delta \geqslant \delta_4$) and N.~Naidenov~\cite{Nai_03} ($\delta < \delta_4$). For $r\geqslant 5$, Karlin's conjecture is proved in the {\it ``polynomial case''} by B.-O.~Eriksson~\cite{Eri_98} ($\delta = \delta_r$) and A.\,Yu.~Shadrin~\cite{Sha_14} ($\delta > \delta_r$). Here $\delta_r = \frac{1}{2^{r-1}r!}$.

Much less is known for other combinations of parameters $p,q,s$. Moduli of continuity $\Omega$ and $\Omega_t$ were found for all $\delta > 0$ in the following situations:
\begin{enumerate}
    \item $r\in\mathbb{N}\setminus\{1\}$, $k = r-1$, $q=\infty$, $1\leqslant p\leqslant\infty$, $s=1$ by V.\,I.~Burenkov~\cite{Bur_86};
    \item $r=2$, $k=1$, $t\in\mathbb{I}$, $p=\infty$, $1\leqslant s < \infty$ -- by Yu.\,V.~Babenko~\cite{Bab_00}, V.\,I.~Burenkov and V.\,A.~Gusakov~\cite{BurGus_03} independently;
    \item $r=2$, $k=1$, $p=q=\infty$, $1\leqslant s < \infty$ -- by Yu.\,V.~Babenko~\cite{Bab_00}, V.\,I.~Burenkov and V.\,A.~Gusakov~\cite{BurGus_03} independently;
    \item $r=2$, $k=1$, $p=s=\infty$, $1\leqslant q < \infty$ by N.~Naidenov~\cite{Nai_03_2};
    \item $r=3$, $k=1$, $p=q=\infty$, $r=1$ by the author~\cite{Sko_19}.
\end{enumerate}
Problem~\eqref{modulus} was solved partially in~\cite{BojNai_02} in the case $1\leqslant q < \infty$, $p=s=\infty$, $r=3$ and $k=1,2$. For the overview of other results in this and closely related directions we refer the reader to books~\cite{MitPecFin_91,KwoZet_92,BabKorKofPic_03} and surveys~\cite{Sha_93, Are_96}. 

In this paper we will study the Landau-Kolmogorov problem in the Taikov case, {\it i.e.} for $p=s=2$ and $q=\infty$. The consideration of the Taikov case is motivated by numerous sharp results on the Landau-Kolmogorov problem for functions defined on the real line $\mathbb{R}$ (see L.\,V.~Taikov~\cite{Tai_68}), on the non-negative half-line $\mathbb{R}_+ = [0,+\infty)$ (see V.\,N.~Gabushin~\cite{Gab_69}, G.\,A.~Kalyabin~\cite{Kal_04}, A.\,A.~Lunev and L.\,L.~Oridoroga~\cite{LunOri_09}), the period $\mathbb{T} = [0,2\pi)$ (see A.\,Yu.~Shadrin~\cite{Sha_90}) and other domains. 
These results 
were generalized in various directions: for integral and fractional derivatives of multivariate functions, for powers of the Laplace-Beltrami operators on manifolds, for powers of infinitesimal generators of semigroups, for powers of self-adjoint operators and abstract linear operators acting in Hilbert (see~\cite{BabBabKriSko_21} and references therein). 

Aforementioned advances make it natural to expect that problems~\eqref{modulus} and~\eqref{modulus_pointwise} in the Taikov case can be solved in full. Curiously enough it seems no previous results in this setting are known. 


\subsection{Related problems}

{\sl Additive inequalities for the norms of derivatives.} The Landau-Kolmogorov problem can be formulated alternatively as the problem of finding the set $\Gamma = \Gamma\left(D^k;L_s^r\right)$ of all possible pairs $(A,B)$ of non-negative numbers such that, for every $f\in L_r^s$, there holds inequality
\begin{equation}
\label{inequality}
    \left\|f^{(k)}\right\|_q \leqslant A\|f\|_p + B\left\|f^{(r)}\right\|_s,
\end{equation}
that is sharp in the sense of minimal possible constant $B$, {\it i.e.}
\[
    B = B(A) = \sup\limits_{f\in W^r_s}\left(\left\|f^{(k)}\right\|_q - A\|f\|_p\right).
\]

The pointwise version of problem~\eqref{inequality} naturally consists of finding the set $\Gamma_t = \Gamma\left(D^k_t;L_s^r\right)$ of all pairs $(A,B)$ of non-negative numbers such that, for every $f\in L_r^s$, there holds sharp inequality
\begin{equation}
\label{inequality_pointwise}
    \left|f^{(k)}(t)\right| \leqslant A\|f\|_p + B\left\|f^{(r)}\right\|_s,
\end{equation}
where 
\[
    B = B_t(A) = \sup\limits_{f\in W^r_s}\left(\left|f^{(k)}(t)\right| - A\|f\|_p\right).
\]

Modulus $\Omega$ and set $\Gamma$ are closely related as the following relations indicate
\begin{equation}
\label{first_relation}
    \Gamma = \left\{ (A,B(A))\,:\,A\geqslant 0\,\text{ and }\,B(A) = \sup\limits_{\delta > 0}\left(\Omega\left(\delta\right) - A\delta\right) < \infty \right\}
\end{equation}
\begin{equation}
\label{second_relation}
    \Omega\left(\delta\right) \leqslant \inf\limits_{(A,B)\in \Gamma} \left(A\delta + B\right).
\end{equation}
Inequality~\eqref{second_relation} turns into equality for concave moduli of continuity $\Omega$. For modulus $\Omega_t$ and set $\Gamma_t$ similar relations hold true
\begin{equation}
\label{first_relation_t}
    \Gamma_t = \left\{ (A,B_t(A))\,:\,A\geqslant 0\,\text{ and }\,B_t(A) = \sup\limits_{\delta > 0}\left(\Omega_t\left(\delta\right) - A\delta\right) < \infty \right\}
\end{equation}
\begin{equation}
\label{second_relation_t}
    \Omega_t\left(\delta\right) = \inf\limits_{(A,B)\in \Gamma_t} \left(A\delta + B\right).
\end{equation}
Remark that equality sign in~\eqref{second_relation_t} follows from~\cite[Lemma~1]{Gab_70}. 

Hence, once moduli of continuity $\Omega$ and $\Omega_t$ are known, one can find the sets $\Gamma$ and $\Gamma_t$ immediately. Additional results on sharp additive inequalities~\eqref{inequality} can be found in~\cite{Bur_80}. We also refer the reader to the papers~\cite{Sha_93, BabKofPic_97} and books~\cite{KwoZet_92,BabKorKofPic_03} for the overview of results on inequalities~\eqref{inequality} and~\eqref{inequality_pointwise}.

Remark an important relation between additive Landau-Kolmogorov inequalities and the Markov-Nikolskii inequalities. It was shown independently in~\cite{Sha_98,BabKofPic_97} that the minimal constant $A$ in inequality~\eqref{inequality} coincides with the sharp constant $M = M\left(D^k;\mathcal{P}_{r-1}\right)$ in the Markov-Nikolskii inequality
\[
    \left\|Q^{(k)}\right\|_q \leqslant M\|Q\|_p,\qquad Q\in\mathcal{P}_{r-1},
\]
where $\mathcal{P}_{r-1}$ is the set of all algebraic polynomials of degree at most $r-1$. In other words, $B(A)$ in~\eqref{first_relation} is finite if and only if $A \geqslant M$. Similarly, the minimal constant $A$ in inequality~\eqref{inequality_pointwise} coincides with the sharp constant $M_t = M\left(D^k_t;\mathcal{P}_{r-1}\right)$ in the pointwise version of the Markov-Nikolskii inequality 
\[
    \left|Q^{(k)}(t)\right| \leqslant M_t \|Q\|_p,\qquad Q\in\mathcal{P}_{r-1},
\]
and, equivalently, $B_t(A)$ is finite if and only if $A \geqslant M_t$.



{\sl The best approximation of unbounded operators by linear bounded ones.} We follow S.\,B.~Stechkin~\cite{Ste_65,Ste_67} to formulate this problem. Let $X$ and $Y$ be Banach spaces, $T:X\to Y$ be an operator with domain $\mathcal{D}(T)$ and $W\subset\mathcal{D}(T)$ be some set. Define the modulus of continuity of operator $T$ on the class $W$:
\begin{equation}
\label{modulus_generic}
    \Omega(\delta;T;W) = \sup{\left\{\|Tx\|_Y\,:\,x\in W,\,\|x\|_X\leqslant \delta\right\}},\qquad \delta \geqslant 0.
\end{equation}
Evidently, the notion $\Omega(\delta;T;W)$ generalizes quantities~\eqref{modulus} and~\eqref{modulus_pointwise}.

Let $\mathcal{L}=\mathcal{L}(X,Y)$ be the set of all linear functionals $S:X\to Y$ and define the error of approximation of operator $T$ by operator $S\in\mathcal{L}$ on the class $W$:
\[
    U\left(T;S;W\right) = \sup\limits_{x\in W}\left\|Tx - Sx\right\|_Y.
\]
For $N > 0$, we set
\begin{equation}
\label{stechkin}
    E_N\left(T;W\right) = \inf\limits_{S\in\mathcal{L},\,\|S\|\leqslant N} U\left(T;S;W\right).
\end{equation}
The Stechkin problem on the best approximation of the operator $T$ by linear bounded operators on $W$ consists in finding the quantity~\eqref{stechkin} and extremal operators (if any exists) delivering $\inf$ in the right hand part of~\eqref{stechkin}.

S.\,B.~Stechkin~\cite{Ste_67} (see also~\cite{Are_96,BabKorKofPic_03}) obtained a simple effective lower estimate for~\eqref{stechkin}.

\begin{prop}
\label{thm_steckhin_lower}
    If $T$ is homogeneous (in particular, linear) operator, $W$ is centrally symmetric convex set, then, for $N > 0$ and $\delta\geqslant 0$, 
    \begin{equation}
    \label{stechkin_lower_estimate}
        E_N(T;W) \geqslant \sup\limits_{\delta\geqslant 0} \left(\Omega(\delta;T;W) - N\delta\right).
    \end{equation}
\end{prop}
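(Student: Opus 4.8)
The plan is to derive the bound for a fixed approximating operator and then optimise over the two free parameters. Fix any linear operator $S\in\mathcal{L}$ with $\|S\|\leqslant N$ and any $\delta\geqslant 0$. For an arbitrary admissible competitor $x\in W$ with $\|x\|_X\leqslant\delta$, I would write $Tx = (Tx-Sx) + Sx$ and apply the triangle inequality in $Y$:
\[
    \|Tx\|_Y \leqslant \|Tx - Sx\|_Y + \|Sx\|_Y.
\]

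Next I would estimate the two terms on the right separately. The operator-norm bound gives $\|Sx\|_Y \leqslant \|S\|\cdot\|x\|_X \leqslant N\delta$, while by the definition of $U\left(T;S;W\right)$ we have $\|Tx-Sx\|_Y\leqslant U\left(T;S;W\right)$ precisely because $x\in W$. Combining these estimates yields $\|Tx\|_Y - N\delta \leqslant U\left(T;S;W\right)$. Since the right-hand side is independent of $x$, taking the supremum over all $x\in W$ with $\|x\|_X\leqslant\delta$ — which is exactly the family appearing in the definition~\eqref{modulus_generic} of $\Omega$ — produces $\Omega(\delta;T;W) - N\delta \leqslant U\left(T;S;W\right)$.

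To conclude, I would eliminate the dependence on $S$ and then on $\delta$. Taking the infimum over all $S$ with $\|S\|\leqslant N$ on the right gives $\Omega(\delta;T;W) - N\delta \leqslant E_N(T;W)$, and since this holds for every $\delta\geqslant 0$, passing to the supremum over $\delta$ establishes~\eqref{stechkin_lower_estimate}.

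I do not anticipate a genuine obstacle here: the whole argument rests only on the triangle inequality in $Y$ and the submultiplicative bound $\|Sx\|_Y\leqslant\|S\|\,\|x\|_X$. In particular, the hypotheses that $T$ be homogeneous and that $W$ be centrally symmetric and convex are \emph{not} needed for this lower estimate; they become relevant only for the matching upper bounds and for describing extremal operators realising equality in~\eqref{stechkin}. The single point requiring care is that $\Omega(\delta;T;W)$ is defined through the simultaneous constraints $x\in W$ and $\|x\|_X\leqslant\delta$, so the supremum step is legitimate exactly because every competitor in the definition of $\Omega$ automatically lies in $W$ and is therefore controlled by $U\left(T;S;W\right)$.
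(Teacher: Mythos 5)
Your proof is correct and is precisely the classical argument: the paper does not actually prove Proposition~\ref{thm_steckhin_lower}, attributing it to Stechkin~\cite{Ste_67} (see also~\cite{Are_96}), and the proof in those sources is the same triangle-inequality computation you give, namely $\|Tx\|_Y \leqslant \|Tx-Sx\|_Y + \|Sx\|_Y \leqslant U(T;S;W) + N\delta$ followed by the infimum over $S$ and the supremum over $\delta$. Your closing observation is also accurate: homogeneity of $T$ and central symmetry/convexity of $W$ play no role in this lower estimate and only become essential in the companion results (e.g.\ Proposition~\ref{thm_best_recovery} and the equality~\eqref{pointwise_stechkin_equality}, whose proofs symmetrize over $\pm x$), so the hypotheses could be weakened as far as inequality~\eqref{stechkin_lower_estimate} alone is concerned.
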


We refer the reader to the survey~\cite{Are_96} for known results on problem~\eqref{stechkin} and discussion of related questions. Remark that in the case $X = L_p$, $Y = L_q$, $W = W^r_s$, $T = D^k$, by inequality~\eqref{stechkin_lower_estimate},
\begin{equation}
\label{stechkin_lower_estimate_corollary}
    E_N\left(D^k;W^r_s\right) \geqslant \sup\limits_{\delta > 0} \left(\Omega(\delta) - N\delta\right) = B(N),\qquad N \geqslant 0,
\end{equation}
and in the case $X = L_p$, $Y = \mathbb{R}$, $W = W^r_s$, $T = D^k_t$, by~\cite[Lemma~1]{Gab_70}, 
\begin{equation}
\label{pointwise_stechkin_equality}    
    E_N\left(D^k_t;W^r_s\right) = \sup\limits_{\delta > 0} \left(\Omega_t(\delta) - N\delta\right) = B_t(N),\qquad N\geqslant 0.
\end{equation}

{\sl The best recover of operators.} Let us follow~\cite{Are_96} to set the problem rigorously. Let $X$ and $Y$ be the Banach spaces, $T:X\to Y$ be an operator with domain $\mathcal{D}(T)$, $W\subset\mathcal{D}(T)$ be some set. By $\mathscr{R}$ we denote either the set $\mathscr{L}$ of all linear operators acting from $X$ to $Y$, or the set of all mappings $\mathscr{O}$ from $X$ to $Y$. For an arbitrary $\delta\geqslant 0$ and $S\in\mathscr{R}$, we set
\[
    U_\delta\left(T;S;W\right) = \sup{\left\{\left\|Tx - Sy\right\|_Y\,:\,x\in W,\,y\in X,\,\|x-y\|_X\leqslant \delta\right\}}.
\]
The problem of optimal recovery of operator $T$ with the help of set of operators $\mathscr{R}$ on elements of the set $W$ with given error $\delta$ consists of finding the quantity
\begin{equation}
\label{best_recovery}
    \mathcal{E}_\delta(\mathscr{R};T;W) = \inf\limits_{S\in\mathscr{R}}U_\delta(T;S;W).
\end{equation}
The detailed survey of known results and further references can be found {\it e.g.}, in~\cite{Are_96}. The following corollary from~\cite[Theorem~2.1]{Are_96} indicates close relations with the Stechkin problem and problem~\eqref{modulus_generic}.

\begin{prop}
\label{thm_best_recovery}
    If $T$ is homogeneous operator (in particular, linear), $W$ is centrally symmetric convex set, then, for every $N\geqslant 0$ and $\delta\geqslant 0$,
    \[
        \Omega(\delta;T;W)\leqslant \mathcal{E}_\delta(\mathscr{O};T;W)\leqslant\mathcal{E}_\delta(\mathscr{L};T;W) \leqslant \inf\limits_{N > 0} \left(E_N(T;W)+N\delta\right).
    \]
\end{prop}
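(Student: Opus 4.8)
The final statement is Proposition 2 (`thm_best_recovery`), which asserts the chain of inequalities
\[
\Omega(\delta;T;W)\leqslant \mathcal{E}_\delta(\mathscr{O};T;W)\leqslant\mathcal{E}_\delta(\mathscr{L};T;W) \leqslant \inf\limits_{N > 0} \left(E_N(T;W)+N\delta\right)
\]
under the hypotheses that $T$ is homogeneous and $W$ is centrally symmetric and convex.

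\medskip

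The plan is to prove the three inequalities separately, working left to right, since each is essentially a one-line consequence of the definitions together with the standard central-symmetry trick.

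\medskip

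For the middle inequality $\mathcal{E}_\delta(\mathscr{O};T;W)\leqslant\mathcal{E}_\delta(\mathscr{L};T;W)$, I would simply note the inclusion $\mathscr{L}\subseteq\mathscr{O}$: every linear operator from $X$ to $Y$ is in particular a mapping from $X$ to $Y$. Since $\mathcal{E}_\delta(\mathscr{R};T;W)=\inf_{S\in\mathscr{R}}U_\delta(T;S;W)$ is an infimum over the admissible class $\mathscr{R}$, enlarging the class from $\mathscr{L}$ to $\mathscr{O}$ can only decrease (or preserve) the infimum. This step requires no use of the hypotheses on $T$ and $W$.

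\medskip

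For the leftmost inequality $\Omega(\delta;T;W)\leqslant \mathcal{E}_\delta(\mathscr{O};T;W)$, I would fix an arbitrary mapping $S\in\mathscr{O}$ and show $\Omega(\delta;T;W)\leqslant U_\delta(T;S;W)$; taking the infimum over $S$ then finishes the step. The idea is the symmetrization argument: given $x\in W$ with $\|x\|_X\leqslant\delta$, apply the definition of $U_\delta$ to the two admissible pairs $(x,0)$ and $(-x,0)$ -- here I use that $0\in W$ (which follows from central symmetry and convexity) so that both $x$ and $-x$ lie in $W$, and that $\|x-0\|_X=\|x\|_X\leqslant\delta$. This gives $\|Tx-S0\|_Y\leqslant U_\delta$ and $\|T(-x)-S0\|_Y\leqslant U_\delta$; using homogeneity $T(-x)=-Tx$ and the triangle inequality on $2Tx=(Tx-S0)-(T(-x)-S0)$ yields $2\|Tx\|_Y\leqslant 2U_\delta(T;S;W)$, hence $\|Tx\|_Y\leqslant U_\delta(T;S;W)$. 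Taking the supremum over all such $x$ gives $\Omega(\delta;T;W)\leqslant U_\delta(T;S;W)$. This is the step where both hypotheses are genuinely used, and the only subtlety is verifying $0\in W$ and that the pair $(-x,0)$ is admissible; I expect this to be the main (though still routine) obstacle.

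\medskip

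For the rightmost inequality, fix $N>0$ and an arbitrary linear $S\in\mathscr{L}$ with $\|S\|\leqslant N$. For admissible $x\in W$, $y\in X$ with $\|x-y\|_X\leqslant\delta$, I would split $Tx-Sy=(Tx-Sx)+S(x-y)$ and estimate $\|Tx-Sy\|_Y\leqslant\|Tx-Sx\|_Y+\|S\|\,\|x-y\|_X\leqslant U(T;S;W)+N\delta$. Taking the supremum over $(x,y)$ gives $U_\delta(T;S;W)\leqslant U(T;S;W)+N\delta$, then the infimum over such $S$ gives $\mathcal{E}_\delta(\mathscr{L};T;W)\leqslant E_N(T;W)+N\delta$, and finally the infimum over $N>0$ closes the chain. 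Assembling the three bounds completes the proof.
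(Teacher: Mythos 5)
Your proof is correct, but it takes a genuinely different route from the paper for a simple reason: the paper does not prove Proposition~\ref{thm_best_recovery} at all --- it is stated as a corollary of Theorem~2.1 in Arestov's survey~\cite{Are_96}, so your self-contained elementary argument supplies a proof where the paper gives only a citation. Your three steps are the natural ones: the inclusion $\mathscr{L}\subseteq\mathscr{O}$ for the middle inequality, the symmetrization trick for the left inequality (the same device that underlies Stechkin's lower bound, Proposition~\ref{thm_steckhin_lower}), and the splitting $Tx-Sy=(Tx-Sx)+S(x-y)$ with $\|S(x-y)\|_Y\leqslant N\delta$ for the right inequality; all three are sound. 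One remark on your second step: the parenthetical justification is a non sequitur. You say you need $0\in W$ ``so that both $x$ and $-x$ lie in $W$,'' but $0\in W$ is irrelevant to that conclusion and, moreover, is never needed: the definition of $U_\delta(T;S;W)$ only requires the comparison point $y$ to lie in $X$, not in $W$, so $y=0$ is automatically admissible, and the membership $-x\in W$ follows directly from central symmetry alone. Consequently convexity of $W$ plays no role anywhere in your argument --- only central symmetry and homogeneity do --- which makes your proof slightly more general than the stated hypotheses; the slip is harmless since the facts you actually invoke are correct.
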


In the case $X = L_p$, $Y = \mathbb{R}$, $W = W^r_s$, $T = D^k_t$ by Proposition~\ref{thm_best_recovery} and~\eqref{pointwise_stechkin_equality},
\begin{equation}
\label{function_best_recovery}
    \mathcal{E}_\delta\left(\mathscr{O};D^k_t;W^r_s\right)=\mathcal{E}_\delta\left(\mathscr{L};D^k_t;W^r_s\right) = \Omega_t(\delta).
\end{equation}

\subsection{Contribution and organization of the paper} 

In this paper we consider the case $p=s=2$ and $q=\infty$. We obtain the following results:
\begin{itemize}
    \item Find $\Omega_t$, $t\in\mathbb{I}$, for all $r\in\mathbb{N}$ and $k\in\mathbb{Z_+}$, $k\leqslant r-1$;
    \item Find $\Omega$ for $r\in\{1,2\}$ and $k\in\{0,r-1\}$;
    \item Solve related problems~\eqref{inequality_pointwise},~\eqref{inequality},~\eqref{stechkin} and~\eqref{best_recovery} in above cases;
    \item Find sharp inequality of the form~\eqref{inequality} with minimal possible constant $A$ for $r\in\mathbb{N}\setminus\{1\}$ and $k\in\{r-2,r-1\}$.
\end{itemize}

The paper is organized as follows. In Section~\ref{pointwise_t_0} we solve pointwise Landau-Kolmogorov problem in the case $t = \pm 1$, and in Section~\ref{pointwise_t_any} for general case $-1 < t < 1$. The Landau-Kolmogorov problem in the case $q=\infty$ is considered in Section~\ref{uniform}.

For definiteness, everywhere below with except for Section~\ref{uniform} we assume that $r\in\mathbb{N}$ and $k\in\mathbb{Z}_+$, $k\leqslant r-1$, are arbitrary. 

\subsection{Main ideas of the proof}


To solve pointwise problems~\eqref{modulus_pointwise} and~\eqref{inequality_pointwise}, we use S.\,B.~Stechkin's idea~\cite{Ste_65,Ste_67} on intermediate approximation of $f^{(k)}(t)$, $t\in\mathbb{I}$, with the help of bounded functional $S:L_2\to\mathbb{R}$. We consider $S$ of the form
\[
    Sf = \int_{-1}^t w^{(r)}(x)f(x)\,{\rm d}x + \int_t^1 w^{(r)}(x)f(x)\,{\rm d}x,\qquad f\in L_2,
\]
where the function $w = w_{\lambda,t}:\mathbb{I}\to\mathbb{R}$ is chosen in a way that equation
\[
    (-1)^rw^{(2r)}(x) + \lambda w(x) = 0,\qquad x\in(-1,t)\cup(t,1),
\]
holds with some $\lambda\geqslant 0$, and
\[
    f^{(k)}(t) = Sf + \int_{-1}^1 w(x) f^{(r)}(x)\,{\rm d}x,\qquad \forall f\in L_2^r.
\]
Above choice of $w$ together with the Schwarz inequality leads to sharp inequality
\[
    \left|f^{(k)}(t)\right| \leqslant \left\|w^{(r)}\right\|_2\|f\|_2 + \|w\|_2\left\|f^{(r)}\right\|_2,
\]
with extremal function $w^{(r)}$ in the case $\lambda > 0$. Following the higher-order Sturm-Liouville theory and Fourier analysis with respect to eigen-functions of operator $\mathcal{A} = (-1)^rD^{2r}$ with boundary conditions $\mathcal{B}: u^{(s)}(-1) = u^{(s)}(1) = 0$, $s=0,1,\ldots,r-1$, we will show that the norm $\left\|w^{(r)}\right\|_2$ attains all values between the sharp constant $M_t$ in the Markov-Nikolskii inequality $\left|Q^{(k)}(t)\right|\leqslant M_t\|Q\|_2$, $Q\in\mathcal{P}_{r-1}$, and $+\infty$. Remark that operator $S$ above provides the solution to the Stechkin problem~\eqref{stechkin} for functional $T = D^k_t:L_2\to\mathbb{R}$ and class $W = W^r_2$.

To solve problems~\eqref{modulus} and~\eqref{inequality} we conjecture (see~\eqref{extremal_eigenfunctions}) that the derivative $\varphi^{(r+k)}$ of every eigen-function of operator $\mathcal{A}$ with boundary conditions $\mathcal{B}$ attains its maximal absolute value at the endpoints $\pm 1$. We prove this conjecture in the case $r\in\{1,2\}$ only. For $r\geqslant 3$,  conjecture~\eqref{extremal_eigenfunctions} looks also plausible as graphs~\ref{X_figure_4} and~\ref{X_figure_6} indicate. Using this conjecture we can prove that for the same $\lambda\geqslant 0$, $\left\|w_{\lambda,t}\right\|_2 \leqslant \left\|w_{\lambda,-1}\right\|_2$ and $\left\|w_{\lambda,t}^{(r)}\right\|_2 \leqslant \left\|w_{\lambda,-1}^{(r)}\right\|_2$, which together with equality~\eqref{second_relation_t} proves that $\Omega(\delta) = \Omega_{-1}(\delta)$.

\section{Case $t=-1$}
\label{pointwise_t_0}

For $\lambda \geqslant 0$, consider boundary value problem 
\begin{equation}
\label{Sturm-Liouville}
    \left\{\begin{array}{ll}
        (-1)^ru^{(2r)}(x) + \lambda u(x) = 0,& x\in(-1,1),\\
        u^{(s)}(-1) = (-1)^{k-1}\delta_{r-k-1,s},& s = 0,1,\ldots,r-1,\\
        u^{(s)}(1) = 0,& s = 0,1,\ldots,r-1,
    \end{array}\right.
\end{equation}
where $\delta_{i,j}$ is the Kronecker symbol. By $u = u_\lambda \in L_2^{2r}$ denote a solution to problem~\eqref{Sturm-Liouville}. Some properties of functions $u_\lambda$ are summarized in the following. 

\begin{lemma}
\label{pointwise_t_0_representation}
    Let $r\in\mathbb{N}$ and $k\in\mathbb{Z}_+$, $k\leqslant r-1$. Then 
    \begin{enumerate}
        \item  problem~\eqref{Sturm-Liouville} has a unique solution $u = u_\lambda\in L_2^{2r}$ for every $\lambda \geqslant 0$;
        \item the function $\left\|u_\lambda^{(r)}\right\|_2$ continuously increases in $\lambda$ and $\left\|u^{(r)}_0\right\|_2 = M_{-1}$;
        \item the function $\left\|u_\lambda\right\|_2$ continuously decreases in $\lambda$ and $\lim\limits_{\lambda\to +\infty} \left\|u_\lambda\right\|_2 = 0$.
    \end{enumerate}
\end{lemma}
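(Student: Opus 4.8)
The plan is to realize $u_\lambda$ through the self-adjoint operator generated by the problem and to read the two norms off its spectral resolution. Let $\mathcal{A} = (-1)^rD^{2r}$ act on $\mathcal{D}(\mathcal{A}) = \{v\in L_2^{2r} : v^{(s)}(\pm 1) = 0,\ s=0,\dots,r-1\}$, i.e. the operator $\mathcal{A}$ with boundary conditions $\mathcal{B}$. Integrating by parts $r$ times gives $\langle\mathcal{A}v,v\rangle = \|v^{(r)}\|_2^2$ for $v\in\mathcal{D}(\mathcal{A})$, all boundary terms dropping out; moreover $\|v^{(r)}\|_2 = 0$ forces $v\in\mathcal{P}_{r-1}$, which with $\mathcal{B}$ gives $v\equiv 0$. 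Hence, by the higher-order Sturm--Liouville theory, $\mathcal{A}$ is self-adjoint with compact resolvent, strictly positive discrete spectrum $0 < \mu_1\le\mu_2\le\cdots$, and an orthonormal eigenbasis $\{\varphi_n\}$ of $L_2$.

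For assertion~1 I would first settle $\lambda = 0$: the conditions in~\eqref{Sturm-Liouville} form a two-point Hermite interpolation problem with $2r$ data for $\mathcal{P}_{2r-1}$, hence have a unique polynomial solution $g := u_0$. For arbitrary $\lambda\ge 0$ I substitute $u_\lambda = g + z_\lambda$; since $g^{(2r)}\equiv 0$ and $g$ already carries the inhomogeneous boundary data, $z_\lambda$ satisfies $\mathcal{B}$ and solves $(\mathcal{A}+\lambda)z_\lambda = -\lambda g$. As $-\lambda\le 0 < \mu_1$ is not an eigenvalue, $\mathcal{A}+\lambda$ is boundedly invertible, so $z_\lambda = -\lambda(\mathcal{A}+\lambda)^{-1}g$ is uniquely determined and
\[
    u_\lambda = g - \lambda(\mathcal{A}+\lambda)^{-1}g = \sum_{n}\frac{\mu_n}{\mu_n+\lambda}\,c_n\varphi_n,\qquad c_n := \langle g,\varphi_n\rangle.
\]
Continuity (indeed analyticity) of $\lambda\mapsto u_\lambda$, and thus of $\|u_\lambda\|_2$ and $\|u_\lambda^{(r)}\|_2$, will follow from the analytic dependence of $(\mathcal{A}+\lambda)^{-1}$ on $\lambda\in[0,\infty)$ in operator and graph norm.

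Assertion~3 I read off the expansion: $\|u_\lambda\|_2^2 = \sum_n \mu_n^2(\mu_n+\lambda)^{-2}c_n^2$, each summand strictly decreasing to $0$ and dominated by $c_n^2$ with $\sum_n c_n^2 = \|g\|_2^2 < \infty$, so by dominated convergence $\|u_\lambda\|_2$ strictly decreases to $0$; equivalently $\dot u_\lambda = -(\mathcal{A}+\lambda)^{-1}u_\lambda$ gives $\tfrac12\frac{d}{d\lambda}\|u_\lambda\|_2^2 = -\langle u_\lambda,(\mathcal{A}+\lambda)^{-1}u_\lambda\rangle < 0$ since $(\mathcal{A}+\lambda)^{-1}$ is positive definite and $u_\lambda\neq 0$. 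For assertion~2, at $\lambda = 0$ I integrate $\int_{-1}^1 u_0^{(r)}Q\,dx$ by parts $r$ times: the term carrying $Q^{(r)}$ vanishes on $\mathcal{P}_{r-1}$, all boundary contributions at $+1$ vanish, and the single surviving contribution at $-1$ (coming from $u_0^{(r-1-k)}(-1) = (-1)^{k-1}$) equals $Q^{(k)}(-1)$. Thus $\langle Q,u_0^{(r)}\rangle = Q^{(k)}(-1)$ for all $Q\in\mathcal{P}_{r-1}$; as $u_0^{(r)}\in\mathcal{P}_{r-1}$ it is the Riesz representer of $Q\mapsto Q^{(k)}(-1)$ on $(\mathcal{P}_{r-1},\|\cdot\|_2)$, whence $\|u_0^{(r)}\|_2 = M_{-1}$. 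For the growth I derive the coupling identity
\[
    \frac{d}{d\lambda}\|u_\lambda^{(r)}\|_2^2 = 2(-1)^r\int_{-1}^1 u_\lambda^{(2r)}\dot u_\lambda\,dx = -2\lambda\int_{-1}^1 u_\lambda\dot u_\lambda\,dx = -\lambda\,\frac{d}{d\lambda}\|u_\lambda\|_2^2,
\]
where the $r$-fold integration by parts loses no boundary terms because $\dot u_\lambda$ satisfies $\mathcal{B}$, and the middle step uses $(-1)^ru_\lambda^{(2r)} = -\lambda u_\lambda$. Combined with assertion~3 this gives $\frac{d}{d\lambda}\|u_\lambda^{(r)}\|_2^2 = \lambda\,\bigl|\tfrac{d}{d\lambda}\|u_\lambda\|_2^2\bigr|\ge 0$, strictly positive for $\lambda > 0$, so $\|u_\lambda^{(r)}\|_2$ continuously increases from $M_{-1}$.

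The point needing the most care is that $u_\lambda\notin\mathcal{D}(\mathcal{A})$ (its boundary data at $-1$ are inhomogeneous), so the clean eigen-expansion controls only $\|u_\lambda\|_2$: one must \emph{not} differentiate that series term-by-term to compute $\|u_\lambda^{(r)}\|_2$, as the formally differentiated series need not converge to $u_\lambda^{(r)}$ in $L_2$. The growth of $\|u_\lambda^{(r)}\|_2$ therefore has to be extracted from the equation itself via the identity above, whose proof hinges precisely on the boundary terms vanishing for $\dot u_\lambda$ but not for $u_\lambda$. The remaining technical points --- justifying dominated convergence for the $\|u_\lambda\|_2^2$ series and the analytic dependence used for continuity --- are routine once the resolvent representation is in place. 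As an alternative route to the monotonicity that sidesteps differentiation, one may note that $u_\lambda$ minimizes $J_\lambda[v] = \|v^{(r)}\|_2^2 + \lambda\|v\|_2^2$ over the affine class cut out by the boundary data and add the two Euler inequalities $J_{\lambda_i}[u_{\lambda_i}]\le J_{\lambda_i}[u_{\lambda_j}]$ for $\lambda_1 < \lambda_2$, which yields $(\lambda_1-\lambda_2)(\|u_{\lambda_1}\|_2^2 - \|u_{\lambda_2}\|_2^2)\le 0$ and $\|u_{\lambda_1}^{(r)}\|_2\le\|u_{\lambda_2}^{(r)}\|_2$ simultaneously.
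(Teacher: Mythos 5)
Your proof is correct, and for assertion 1, assertion 3, and the identity $\left\|u_0^{(r)}\right\|_2 = M_{-1}$ it is essentially the paper's argument: the same operator $\mathcal{A}$ on $\mathcal{L}$ with compact, positive, self-adjoint inverse, the same decomposition $u_\lambda = u_0 + z_\lambda$ with $z_\lambda = -\lambda(\mathcal{A}+\lambda)^{-1}u_0$, the same eigen-expansion and dominated-convergence argument, and the same Riesz-representer identification of $M_{-1}$. Where you genuinely diverge is the monotonicity of $\left\|u_\lambda^{(r)}\right\|_2$. The paper obtains it from an explicit formula: since $z_\lambda\in\mathcal{D}(\mathcal{A})$, term-by-term $r$-fold differentiation of \emph{its} series is legitimate, the system $\Phi_r=\left\{\varphi_n^{(r)}\right\}$ is orthogonal with $\left\|\varphi_n^{(r)}\right\|_2^2=\lambda_n$, and $\Phi_r\perp\mathcal{P}_{r-1}\ni u_0^{(r)}$, which yields
\[
    \left\|u_\lambda^{(r)}\right\|_2^2 = \left\|u_0^{(r)}\right\|_2^2 + \sum\limits_{n=1}^\infty \frac{\lambda^2\lambda_n\left|\left(u_0,\varphi_n\right)\right|^2}{(\lambda+\lambda_n)^2},
\]
whose terms visibly increase in $\lambda$. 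So your caution that one must not differentiate the series term-by-term is correct for the full series of $u_\lambda$ (indeed $u_0^{(r)}\perp\Phi_r$, so the naive differentiated series cannot represent $u_\lambda^{(r)}$), but the paper never does that: the splitting off of $u_0$ together with the orthogonality $\Phi_r\perp\mathcal{P}_{r-1}$ is precisely the device that makes the series computation legal. Your coupling identity $\frac{d}{d\lambda}\left\|u_\lambda^{(r)}\right\|_2^2 = -\lambda\,\frac{d}{d\lambda}\left\|u_\lambda\right\|_2^2$ (justified by the vanishing of boundary terms for $\dot u_\lambda\in\mathcal{L}$ and the equation itself), as well as your variational alternative via $J_\lambda$, is a clean and correct substitute that avoids any series for the derivative. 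Its only cost is that it delivers monotonicity without the explicit formula above, which the paper re-uses downstream: the sharpness argument at $\lambda=0$ in Theorem~\ref{thm_pointwise_t_0} and the analysis of $f(\lambda)=\left\|u_\lambda^{(r)}\right\|_2^2/\bigl(\lambda^2\left\|u_\lambda\right\|_2^2\bigr)$ in Theorem~\ref{thm_landau_kolmogorov_t_0} both quote it, so in the paper's architecture that formula does double duty, whereas your route would require recovering it (or substitutes) later.
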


Remark that in~\cite{KozMaz_97} questions on solvability of boundary value problems close to problem~\eqref{Sturm-Liouville} and properties of their solutions were studied.

The following result gives the solution to the Landau-Kolmogorov problem~\eqref{modulus_pointwise} for the functional $D_{-1}^k:L_2\to \mathbb{R}$ in the Taikov case and to the problem of the best recovery of $D^k_{-1}$ on class $W^r_2$ whose elements are given with an error.

\begin{theorem}
\label{thm_landau_kolmogorov_t_0}
    Let $r\in\mathbb{N}$ and $k\in\mathbb{Z}_+$, $k\leqslant r-1$, and $\mathscr{R} = \mathscr{O}$ or $\mathscr{R} = \mathscr{L}$. Then, for every $\delta > 0$, there exists unique $\lambda = \lambda(\delta) > 0$ such that $\delta\lambda\cdot \left\|u_\lambda\right\|_2 = \left\|u_\lambda^{(r)}\right\|_2$, and there hold equalities 
    \[
        \Omega_{-1}(\delta) := \Omega\left(\delta; D_{-1}^k;W_2^r\right) = \mathcal{E}_\delta\left(\mathscr{R};D^k_{-1};W^r_2\right) = \left\|u_\lambda^{(r)}\right\|_2\delta + \left\|u_\lambda\right\|_2.
    \]
\end{theorem}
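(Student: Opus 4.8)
The plan is to realize the functional $D^k_{-1}$ as a Stechkin-type approximation by the bounded functional $Sf = \int_{-1}^1 u_\lambda^{(r)}(x)f(x)\,{\rm d}x$ and to read off the sharp constants from the solution $u_\lambda$ of~\eqref{Sturm-Liouville}. First I would integrate $\int_{-1}^1 u_\lambda f^{(r)}$ by parts $r$ times; the boundary conditions in~\eqref{Sturm-Liouville} are rigged so that all terms at $+1$ vanish and at $-1$ only the one carrying $f^{(k)}(-1)$ survives, yielding the identity
\[
    f^{(k)}(-1) = \int_{-1}^1 u_\lambda^{(r)}(x)f(x)\,{\rm d}x - (-1)^r\int_{-1}^1 u_\lambda(x)f^{(r)}(x)\,{\rm d}x,\qquad f\in L_2^r .
\]
Two applications of the Cauchy--Schwarz inequality give, for every $\lambda\geqslant 0$, the inequality $\bigl|f^{(k)}(-1)\bigr|\leqslant \bigl\|u_\lambda^{(r)}\bigr\|_2\|f\|_2 + \|u_\lambda\|_2\bigl\|f^{(r)}\bigr\|_2$, so $\bigl(A_\lambda,B_\lambda\bigr):=\bigl(\|u_\lambda^{(r)}\|_2,\|u_\lambda\|_2\bigr)$ is an admissible pair for~\eqref{inequality_pointwise}. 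Equality holds for $f=c\,u_\lambda^{(r)}$, since then $f^{(r)}=c\,u_\lambda^{(2r)}=-c(-1)^r\lambda u_\lambda$ is proportional to $u_\lambda$; this test function shows the pair is in fact sharp, i.e. $B_{-1}(A_\lambda)=B_\lambda$.

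The optimal parameter is located through a differential identity. Putting $v=\partial u_\lambda/\partial\lambda$, differentiation of the equation in~\eqref{Sturm-Liouville} gives $(-1)^rv^{(2r)}+\lambda v+u_\lambda=0$ with homogeneous data $v^{(s)}(\pm1)=0$, $s=0,\dots,r-1$. Integrating $\int u_\lambda^{(r)}v^{(r)}$ by parts $r$ times (all boundary terms vanish) and inserting $u_\lambda^{(2r)}=-(-1)^r\lambda u_\lambda$ yields
\[
    \tfrac{d}{d\lambda}\bigl\|u_\lambda^{(r)}\bigr\|_2^2 = 2\int_{-1}^1 u_\lambda^{(r)}v^{(r)} = -2\lambda\int_{-1}^1 u_\lambda v = -\lambda\,\tfrac{d}{d\lambda}\|u_\lambda\|_2^2 .
\]
Writing $P(\lambda)=\|u_\lambda^{(r)}\|_2^2$ and $N(\lambda)=\|u_\lambda\|_2^2$, this relation $P'=-\lambda N'$ is precisely what turns the first-order optimality condition for minimizing $A\delta+B_{-1}(A)$ into the stated equation $\delta\lambda\|u_\lambda\|_2=\|u_\lambda^{(r)}\|_2$.

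I would then assemble everything through the duality~\eqref{second_relation_t}. Since $B_{-1}(A)=\sup_{f\in W_2^r}\bigl(|f^{(k)}(-1)|-A\|f\|_2\bigr)$ is a supremum of functions affine in $A$, it is convex and decreasing, and by the Markov--Nikolskii remark it is finite exactly for $A\geqslant M_{-1}$. The sharp pairs from the first step show that $\lambda\mapsto(A_\lambda,B_\lambda)$ traces the graph of $B_{-1}$, and by Lemma~\ref{pointwise_t_0_representation} $A_\lambda$ increases continuously from $M_{-1}$. A short argument shows $A_\lambda\to+\infty$: otherwise $B_{-1}$ would vanish for all large $A$ (because $B_\lambda=\|u_\lambda\|_2\to0$), forcing $|f^{(k)}(-1)|\leqslant A\|f\|_2$ for every $f$, which fails for narrow spikes. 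Hence the parametrization covers all $A\geqslant M_{-1}$, and by~\eqref{second_relation_t}
\[
    \Omega_{-1}(\delta) = \inf_{A\geqslant M_{-1}}\bigl(A\delta+B_{-1}(A)\bigr) = \inf_{\lambda\geqslant 0}\bigl(\delta\,\|u_\lambda^{(r)}\|_2 + \|u_\lambda\|_2\bigr).
\]
Convexity of $A\delta+B_{-1}(A)$, together with $B_{-1}'(A_\lambda)=-\|u_\lambda^{(r)}\|_2/(\lambda\|u_\lambda\|_2)$, which tends to $-\infty$ as $\lambda\to0$ and to $0$ as $\lambda\to\infty$, produces a unique interior minimizer, i.e. a unique $\lambda(\delta)>0$ solving $\delta\lambda\|u_\lambda\|_2=\|u_\lambda^{(r)}\|_2$; evaluating the objective there gives the value $\delta\|u_\lambda^{(r)}\|_2+\|u_\lambda\|_2$, while the extremal function $f=c\,u_{\lambda(\delta)}^{(r)}$ (normalized so $\|f^{(r)}\|_2=1$, whence $\|f\|_2=\delta$) realizes it, securing the matching lower bound. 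The equality with $\mathcal{E}_\delta(\mathscr{R};D^k_{-1};W^r_2)$ is then immediate from~\eqref{function_best_recovery}.

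The hard part, I expect, is the existence-and-uniqueness bookkeeping: establishing that $A_\lambda$ exhausts $[M_{-1},+\infty)$ and that the ratio $\|u_\lambda^{(r)}\|_2/(\lambda\|u_\lambda\|_2)$ is a strictly monotone bijection of $(0,\infty)$ onto $(0,\infty)$. Monotonicity follows cleanly from convexity of $B_{-1}$ via $P'=-\lambda N'$, but the boundary behaviour as $\lambda\to0^+$ and $\lambda\to+\infty$ (and strictness, to rule out flat pieces of $B_{-1}$) is where the analytic dependence of $u_\lambda$ on $\lambda$ and the monotonicity assertions of Lemma~\ref{pointwise_t_0_representation} must be used carefully.
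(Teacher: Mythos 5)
Your overall architecture matches the paper's in its main steps --- the sharp inequality~\eqref{pointwise_inequality} with extremal function $c\,u_\lambda^{(r)}$, the reduction through~\eqref{second_relation_t}, and the final appeal to~\eqref{function_best_recovery} are exactly what the paper does --- and your variational identity $P'=-\lambda N'$ is a genuinely different (and rather clean) substitute for part of the paper's explicit computation, which instead expands everything in the eigen-basis of $\mathcal{A}$ via~\eqref{norm_equality}--\eqref{norm_derivative_equality}. However, there is a genuine gap at exactly the point you flag and then leave open: the theorem asserts \emph{uniqueness} of $\lambda(\delta)$, which is equivalent to \emph{strict} monotonicity of the ratio $h(\lambda)=\left\|u_\lambda^{(r)}\right\|_2/\left(\lambda\left\|u_\lambda\right\|_2\right)$, and your convexity argument cannot produce it. Convexity of $B_{-1}$ only makes the slope $-h(\lambda)$ non-decreasing along the curve $A_\lambda$; if $B_{-1}$ had an affine piece, then $h$ would be constant on an interval of $\lambda$'s, and every $\lambda$ in that interval would solve $\delta\lambda\left\|u_\lambda\right\|_2=\left\|u_\lambda^{(r)}\right\|_2$ for the corresponding $\delta$, destroying uniqueness. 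Nothing you invoke excludes this: Lemma~\ref{pointwise_t_0_representation} gives monotonicity of $\left\|u_\lambda^{(r)}\right\|_2$ and $\left\|u_\lambda\right\|_2$ separately, which says nothing about their ratio. Of your two limit claims, $h\to+\infty$ as $\lambda\to0^+$ is easy (since $h\geqslant M_{-1}/\left(\lambda\left\|u_0\right\|_2\right)$), and $h\to0$ as $\lambda\to+\infty$ does follow softly from convexity once $A_\lambda\to+\infty$ is known, because a non-increasing, non-negative convex function on $[M_{-1},+\infty)$ has one-sided derivatives tending to $0$; so those assertions are fixable within your framework.

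The paper closes the strictness hole with the eigen-expansion: it writes $h^2(\lambda)=f_1(\lambda)+f_2(\lambda)$, where $f_1(\lambda)=\left\|u_0^{(r)}\right\|_2^2/\left(\lambda^2\left\|u_\lambda\right\|_2^2\right)$ is \emph{strictly} decreasing because $\lambda^2\left\|u_\lambda\right\|_2^2=\sum_n\lambda_n^2\left|\left(u_0,\varphi_n\right)\right|^2\left(1+\lambda_n/\lambda\right)^{-2}$ is strictly increasing, while $f_2$ is shown non-increasing by a double-sum symmetrization; strict decrease of $f_1$ alone then yields uniqueness, and the limit $h\to0$ uses the divergence $\sum_n\lambda_n^2\left|\left(u_0,\varphi_n\right)\right|^2=+\infty$ forced by $u_0\notin\mathcal{L}$. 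If you prefer to stay inside your own scheme, the gap can be closed as follows: if $h\equiv c$ on an interval, then $P=c^2\lambda^2N$ combined with $P'=-\lambda N'$ gives $N'/N=-2c^2/(1+c^2\lambda)$ there, hence $N(\lambda)=C\left(1+c^2\lambda\right)^{-2}$ on that interval; since $N$ is real-analytic on $(0,+\infty)$ (clear from~\eqref{norm_equality}), this formula persists for all $\lambda>0$, so $\lambda^2N(\lambda)$ stays bounded as $\lambda\to+\infty$, i.e. $\sum_n\lambda_n^2\left|\left(u_0,\varphi_n\right)\right|^2<\infty$, which would place $u_0$ in $\mathcal{L}$ --- contradicting the boundary condition $u_0^{(r-k-1)}(-1)\neq0$. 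Without some such argument, the uniqueness half of the theorem remains unproven in your write-up.
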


The set of pairs of sharp constants in additive inequalities~\eqref{inequality_pointwise} is described by the following result.

\begin{theorem}
\label{thm_pointwise_t_0}
    Let $r\in\mathbb{N}$ and $k\in\mathbb{Z}_+$, $k\leqslant r-1$. Then 
    \[
        \Gamma_{-1} := \Gamma\left(D_{-1}^k;L_2^r\right) = \left\{\left(\left\|u_\lambda^{(r)}\right\|_2, \left\|u_\lambda\right\|_2\right),\,\lambda\geqslant 0\right\}.
    \]
\end{theorem}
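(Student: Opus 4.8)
The plan is to read off $\Gamma_{-1}$ as the concave‑conjugate graph of the minimal constant $A\mapsto B_{-1}(A)$, exploiting the parametric description of $\Omega_{-1}$ from Theorem~\ref{thm_landau_kolmogorov_t_0}. Write $a(\lambda):=\left\|u_\lambda^{(r)}\right\|_2$ and $b(\lambda):=\left\|u_\lambda\right\|_2$. By relation~\eqref{first_relation_t} one has $\Gamma_{-1}=\{(A,B_{-1}(A)):A\geqslant 0,\ B_{-1}(A)<\infty\}$ with $B_{-1}(A)=\sup_{\delta>0}(\Omega_{-1}(\delta)-A\delta)$, and by the Markov--Nikolskii remark in the introduction the finiteness domain is exactly $\{A\geqslant M_{-1}\}$. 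Hence it suffices to prove that $B_{-1}(a(\lambda))=b(\lambda)$ for every $\lambda\geqslant 0$ and that $\lambda\mapsto a(\lambda)$ maps $[0,\infty)$ bijectively onto $[M_{-1},\infty)$.

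For the bound $B_{-1}(a(\lambda))\leqslant b(\lambda)$ I would first produce the representation underlying the ``main ideas'' sketch. Integrating $\int_{-1}^1 u_\lambda f^{(r)}$ by parts $r$ times and substituting $(-1)^r u_\lambda^{(2r)}=-\lambda u_\lambda$ from~\eqref{Sturm-Liouville}, all boundary contributions at $\pm1$ cancel except a single term carrying $f^{(k)}(-1)$; this cancellation is precisely what the boundary data $u^{(s)}(-1)=(-1)^{k-1}\delta_{r-k-1,s}$, $u^{(s)}(1)=0$ are engineered to force. The outcome is an identity of the form $f^{(k)}(-1)=\int_{-1}^1 u_\lambda^{(r)}f+(-1)^{r+1}\int_{-1}^1 u_\lambda f^{(r)}$, valid for all $f\in L_2^r$. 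Applying the Cauchy--Schwarz inequality to both integrals yields $\left|f^{(k)}(-1)\right|\leqslant a(\lambda)\|f\|_2+b(\lambda)\left\|f^{(r)}\right\|_2$, whence on $W_2^r$ one gets $\left|f^{(k)}(-1)\right|-a(\lambda)\|f\|_2\leqslant b(\lambda)\left\|f^{(r)}\right\|_2\leqslant b(\lambda)$, that is $B_{-1}(a(\lambda))\leqslant b(\lambda)$. In particular $\Omega_{-1}(\delta)\leqslant a(\lambda)\delta+b(\lambda)$ for all $\delta$ and all $\lambda$, so $\Omega_{-1}$ is concave with all supporting slopes at least $M_{-1}$.

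The reverse inequality for $\lambda>0$ follows immediately from Theorem~\ref{thm_landau_kolmogorov_t_0}: choosing the distinguished $\delta=\delta(\lambda):=a(\lambda)/(\lambda b(\lambda))$ for which $\Omega_{-1}(\delta(\lambda))=a(\lambda)\delta(\lambda)+b(\lambda)$, I obtain $B_{-1}(a(\lambda))\geqslant\Omega_{-1}(\delta(\lambda))-a(\lambda)\delta(\lambda)=b(\lambda)$, hence $B_{-1}(a(\lambda))=b(\lambda)$. The endpoint $\lambda=0$, where $a(0)=M_{-1}$, must be treated separately, since the supporting line of slope $M_{-1}$ touches the graph of $\Omega_{-1}$ only as $\delta\to\infty$: as $\Omega_{-1}$ is concave with slopes $\geqslant M_{-1}$, the map $\delta\mapsto\Omega_{-1}(\delta)-M_{-1}\delta$ is nondecreasing, so $B_{-1}(M_{-1})=\lim_{\delta\to\infty}(\Omega_{-1}(\delta)-M_{-1}\delta)$. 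Letting $\lambda\to0$ in the majorant of the previous paragraph gives $\Omega_{-1}(\delta)-M_{-1}\delta\leqslant b(0)$ for each fixed $\delta$, while $\Omega_{-1}(\delta(\lambda))-M_{-1}\delta(\lambda)\geqslant b(\lambda)$; continuity of $b$ from Lemma~\ref{pointwise_t_0_representation} then pins the limit to $b(0)=\left\|u_0\right\|_2$, so $B_{-1}(M_{-1})=b(0)$.

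Finally I would check that $a$ is a continuous increasing bijection of $[0,\infty)$ onto $[M_{-1},\infty)$: continuity, strict monotonicity and $a(0)=M_{-1}$ are furnished by Lemma~\ref{pointwise_t_0_representation}, while surjectivity needs $a(\lambda)\to+\infty$ as $\lambda\to+\infty$ — were $a$ bounded by some $L$, the majorant of the second paragraph (together with $b(\lambda)\to0$) would force the false inequality $\left|f^{(k)}(-1)\right|\leqslant L\|f\|_2$ for every $f\in L_2^r$. Then each $A\geqslant M_{-1}$ equals $a(\lambda)$ for a unique $\lambda\geqslant 0$ with $B_{-1}(A)=b(\lambda)$, which is exactly the asserted description of $\Gamma_{-1}$. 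I expect the main technical obstacle to be the bookkeeping of signs and boundary terms in the integration‑by‑parts identity — verifying that every contribution except the one proportional to $f^{(k)}(-1)$ vanishes — and, secondarily, the justification that $a(\lambda)\to+\infty$, which surjectivity requires but which is only implicit in the statement of Lemma~\ref{pointwise_t_0_representation}.
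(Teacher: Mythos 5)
Your reduction via~\eqref{first_relation_t} and the Markov--Nikolskii finiteness criterion is sound, and your upper bound --- the integration-by-parts identity forced by the boundary data of~\eqref{Sturm-Liouville} followed by the Schwarz inequality, giving $\left|f^{(k)}(-1)\right|\leqslant a(\lambda)\|f\|_2+b(\lambda)\left\|f^{(r)}\right\|_2$ --- is exactly the paper's inequality~\eqref{pointwise_inequality}. Your surjectivity argument for $a(\lambda)$ is also essentially the paper's contradiction argument (though you assert rather than prove that $\left|f^{(k)}(-1)\right|\leqslant L\|f\|_2$ cannot hold on all of $L_2^r$; the paper exhibits the test functions $\cos\left(\pi n(x+1)+\tfrac{\pi k}{2}\right)$ for this purpose).

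The genuine gap is that your key step --- the lower bound $B_{-1}(a(\lambda))\geqslant b(\lambda)$ --- is circular. You obtain it by invoking Theorem~\ref{thm_landau_kolmogorov_t_0}, but in the paper that theorem is proved \emph{after} Theorem~\ref{thm_pointwise_t_0} and \emph{by means of} the sharpness of~\eqref{pointwise_inequality}: its proof explicitly recalls that the function $u_\lambda^{(r)}/\bigl(\lambda\left\|u_\lambda\right\|_2\bigr)$ is extremal in~\eqref{pointwise_inequality}, and that extremality is established inside the proof of the very statement you are proving. The lower ($\geqslant$) half of Theorem~\ref{thm_landau_kolmogorov_t_0} \emph{is} the sharpness assertion, so it cannot be taken as a black box here. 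What your proposal never produces is the one construction that carries the whole theorem: for $\lambda>0$ take $f_\lambda:=u_\lambda^{(r)}$; the differential equation gives $f_\lambda^{(r)}=u_\lambda^{(2r)}=(-1)^{r-1}\lambda u_\lambda$, hence $\left\|f_\lambda^{(r)}\right\|_2=\lambda\left\|u_\lambda\right\|_2$, and substituting $f=f_\lambda$ into your own integration-by-parts identity yields $f_\lambda^{(k)}(-1)=\left\|u_\lambda^{(r)}\right\|_2^2+\lambda\left\|u_\lambda\right\|_2^2=a(\lambda)\left\|f_\lambda\right\|_2+b(\lambda)\left\|f_\lambda^{(r)}\right\|_2$; normalizing by $\left\|f_\lambda^{(r)}\right\|_2$ gives $B_{-1}(a(\lambda))\geqslant b(\lambda)$. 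The endpoint $\lambda=0$ then still requires a separate limiting argument, since there is no extremal function there: the paper computes $\lim_{\mu\to 0^+}\bigl(f_\mu^{(k)}(-1)-a(0)\left\|f_\mu\right\|_2\bigr)/\left\|f_\mu^{(r)}\right\|_2=b(0)$ using the series~\eqref{norm_derivative_equality}; your concavity-and-continuity argument would also work, but only once the case $\lambda>0$ has been established non-circularly, because it too rests on the touching property you imported from Theorem~\ref{thm_landau_kolmogorov_t_0}.
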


The following result delivers the solution to the problem on the best approximation of functional $D^k_{-1}:L_2\to\mathbb{R}$ by linear bounded ones on class $W^r_2$.

\begin{theorem}
\label{stechkin_pointwise_t_0}
    Let $r\in\mathbb{N}$ and $k\in\mathbb{Z}_+$, $k\leqslant r-1$. For $N\geqslant M_{-1}$, let $\lambda = \lambda_N\geqslant 0$ be such that $N = \left\|u_{\lambda_N}^{(r)}\right\|_2$ and consider functional $S_{N,-1}:L_2\to\mathbb{R}$:
    \[
        S_{N,-1}f = \int_{-1}^1 u^{(r)}_{\lambda_N}(x)f(x)\,{\rm d}x,\qquad f\in L_2.
    \]
    Then, for $N\in\left[0, M_{-1}\right)$, $E_N\left(D^k_{-1};W^r_2\right) = +\infty$, and, for $N\geqslant M_{-1}$, 
    \[
        E_N\left(D^k_{-1};W^r_2\right) = U\left(D^k_{-1};S_{N,-1};W^r_2\right) = \left\|u_{\lambda_N}\right\|_2.
    \]
\end{theorem}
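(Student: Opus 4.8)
The plan is to prove Theorem~\ref{stechkin_pointwise_t_0} by combining the lower estimate of Stechkin (Proposition~\ref{thm_steckhin_lower}) with a matching upper estimate furnished by the explicit functional $S_{N,-1}$. First I would dispose of the regime $N \in [0, M_{-1})$. Here the key fact, recalled in the introduction, is that the minimal admissible $A$ in the pointwise inequality~\eqref{inequality_pointwise} equals the sharp Markov--Nikolskii constant $M_{-1}$, so that $B_{-1}(N) = +\infty$ precisely when $N < M_{-1}$. By the lower estimate~\eqref{pointwise_stechkin_equality}, which gives $E_N(D^k_{-1};W^r_2) = B_{-1}(N)$, I immediately obtain $E_N(D^k_{-1};W^r_2) = +\infty$ for $N < M_{-1}$. (Alternatively, one tests against polynomials $Q \in \mathcal{P}_{r-1}$ scaled to have large derivative at $-1$ but bounded $L_2$ norm, on which any linear functional of norm $< M_{-1}$ must incur unbounded error.)

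For $N \geqslant M_{-1}$, the strategy splits into the two inequalities. The \emph{lower bound} $E_N \geqslant \|u_{\lambda_N}\|_2$ follows from~\eqref{pointwise_stechkin_equality}, $E_N = B_{-1}(N) = \sup_{\delta>0}(\Omega_{-1}(\delta) - N\delta)$, together with Theorem~\ref{thm_landau_kolmogorov_t_0}: writing $\Omega_{-1}(\delta) = \|u_\lambda^{(r)}\|_2\,\delta + \|u_\lambda\|_2$ where $\lambda=\lambda(\delta)$, I choose $\delta$ so that $\lambda(\delta) = \lambda_N$, i.e. the particular $\delta$ whose associated parameter is exactly $\lambda_N$; then $N = \|u_{\lambda_N}^{(r)}\|_2$ makes the term $(\|u_\lambda^{(r)}\|_2 - N)\delta$ vanish, leaving $\Omega_{-1}(\delta) - N\delta = \|u_{\lambda_N}\|_2$, hence $E_N \geqslant \|u_{\lambda_N}\|_2$. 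The monotonicity statements in Lemma~\ref{pointwise_t_0_representation} (that $\|u_\lambda^{(r)}\|_2$ increases continuously from $M_{-1}$ to $+\infty$) guarantee that for each $N \geqslant M_{-1}$ a suitable $\lambda_N$ exists, making this choice legitimate.

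The heart of the argument is the \emph{upper bound} $U(D^k_{-1};S_{N,-1};W^r_2) \leqslant \|u_{\lambda_N}\|_2$, which simultaneously shows $\|S_{N,-1}\| \leqslant N$ so that $S_{N,-1}$ is admissible in the infimum~\eqref{stechkin}. I would proceed from the representation identity underlying the main-ideas section: for $u = u_{\lambda_N}$ solving~\eqref{Sturm-Liouville}, integration by parts $r$ times against the boundary conditions yields, for every $f \in W^r_2$,
\[
    f^{(k)}(-1) = \int_{-1}^1 u^{(r)}_{\lambda_N}(x)\,f(x)\,\mathrm{d}x + \int_{-1}^1 u_{\lambda_N}(x)\,f^{(r)}(x)\,\mathrm{d}x = S_{N,-1}f + \int_{-1}^1 u_{\lambda_N}(x)\,f^{(r)}(x)\,\mathrm{d}x,
\]
where the boundary terms telescope to $f^{(k)}(-1)$ because of the Kronecker-delta normalization $u^{(s)}(-1) = (-1)^{k-1}\delta_{r-k-1,s}$ and the vanishing conditions at $+1$, and the equation $(-1)^r u^{(2r)} + \lambda u = 0$ is used to identify the first integral with $S_{N,-1}f$. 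Consequently $f^{(k)}(-1) - S_{N,-1}f = \int_{-1}^1 u_{\lambda_N}\,f^{(r)}$, and the Cauchy--Schwarz inequality gives $|f^{(k)}(-1) - S_{N,-1}f| \leqslant \|u_{\lambda_N}\|_2\,\|f^{(r)}\|_2 \leqslant \|u_{\lambda_N}\|_2$ on $W^r_2$. Taking the supremum over $W^r_2$ bounds $U$ by $\|u_{\lambda_N}\|_2$; one also checks $\|S_{N,-1}\| = \|u^{(r)}_{\lambda_N}\|_2 = N$ directly from its definition as an $L_2$-inner-product functional, so $S_{N,-1}$ competes in~\eqref{stechkin} and forces $E_N \leqslant \|u_{\lambda_N}\|_2$.

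\textbf{Main obstacle.} The delicate point is justifying the representation identity rigorously, namely verifying that the repeated integration by parts produces \emph{exactly} $f^{(k)}(-1)$ with no residual boundary contributions. This hinges on the precise sign $(-1)^{k-1}$ and the single non-zero Kronecker entry at $s = r-k-1$ in the boundary data of~\eqref{Sturm-Liouville}, which must be matched against the boundary terms $\sum_s (-1)^{s}\bigl[u^{(r-1-s)}f^{(s)}\bigr]_{-1}^{1}$ arising from integrating $\int u^{(r)} f$ by parts; the normalization is designed so that all terms cancel except the one equal to $f^{(k)}(-1)$. Care is needed that $u_{\lambda_N} \in L_2^{2r}$ (guaranteed by part~1 of Lemma~\ref{pointwise_t_0_representation}) so that all the integrations by parts are valid, and that extremality is attained by $f^{(r)} = u_{\lambda_N}/\|u_{\lambda_N}\|_2$, confirming the supremum is genuinely achieved rather than merely bounded.
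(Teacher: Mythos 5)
Your proposal is correct and follows essentially the same route as the paper: the lower bound via \eqref{pointwise_stechkin_equality} evaluated at the particular $\delta$ whose associated parameter is $\lambda_N$ (the paper takes $\delta = \|u_{\lambda_N}^{(r)}\|_2/(\lambda_N\|u_{\lambda_N}\|_2)$, which is exactly your choice), and the upper bound via the integration-by-parts identity from the proof of Theorem~\ref{thm_pointwise_t_0} plus the Schwarz inequality, with the two bounds sandwiching $E_N = U = \|u_{\lambda_N}\|_2$. Your only deviations are cosmetic: you justify the $N < M_{-1}$ case explicitly (the paper says ``clearly''), you record the admissibility check $\|S_{N,-1}\| = N$ which the paper leaves implicit, and you drop a harmless factor $(-1)^{r-1}$ in front of $\int u_{\lambda_N} f^{(r)}$, which does not affect the Schwarz bound.
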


Remark, that $\Omega_1 = \Omega_{-1}$ and $\Gamma_1 = \Gamma_{-1}$ due to symmetry considerations. Hence, results similar to Theorems~\ref{thm_landau_kolmogorov_t_0},~\ref{thm_pointwise_t_0} and~\ref{stechkin_pointwise_t_0} hold true in the case $t = 1$ with the function $u_\lambda$ being replaced with the function $(-1)^{r-k}u_\lambda(-x)$, $x\in\mathbb{I}$.

\subsection{Proof of Lemma~\ref{pointwise_t_0_representation}}
\label{ss_lemma_1}

Consider the space
\[
    \mathcal{L} := \left\{u\in L_2^{2r}\,:\,u^{(s)}(-1) = u^{(s)}(1) = 0,\; s=0,1,\ldots,r-1\right\}
\]
and linear operator $\mathcal{A} : L_2\to L_2$ with domain $\mathcal{L}$ mapping a function $u\in \mathcal{L}$ into the function $\mathcal{A}u = (-1)^ru^{(2r)}$. Operator $\mathcal{A}$ possesses the following properties:
\begin{itemize}
    \item {\it $\mathcal{A}$ is self-adjoint}. Indeed, for $u,v\in\mathcal{L}$, integrating by parts, we obtain
    \[
        (\mathcal{A}u, v) = \int_{-1}^1 (-1)^{r} u^{(2r)}(x) v(x)\,{\rm d}x = \int_{-1}^1u(x)(-1)^rv^{(2r)}(x)\,{\rm d}x = (u, \mathcal{A}v).
    \]
    \item {\it $\mathcal{A}$ is coercive}. Indeed, for every $u\in \mathcal{L}$ and $x\in\mathbb{I}$, expanding function $u$ with the help of the Taylor formula with the remainder in the integral form and applying the Schwarz inequality, we obtain
    \[
        \left|u(x)\right| = \left|\int_{-1}^x \frac{(x-\xi)^{r-1}}{(r-1)!} u^{(r)}(\xi)\,{\rm d}\xi\right| \leqslant 2^{r-1}\left|\int_{-1}^1 u^{(r)}(\xi)\,{\rm d}\xi\right| \leqslant \frac{2^r}{\sqrt{2}}\left\|u^{(r)}\right\|_2.
    \]
    Hence, $\left\|u\right\|_2 \leqslant 2^r\left\|u^{(r)}\right\|_2$ and, integrating by parts, we have
    \[
        (\mathcal{A}u,u) = \int_{-1}^1(-1)^ru^{(2r)}(x)u(x)\,{\rm d}x = \int_{-1}^1\left(u^{(r)}(x)\right)^2\,{\rm d}x \geqslant \frac{\|u\|_2^2}{2^{2r}}. 
    \]
    \item {\it $\mathcal{A}^{-1}$ is Hilbert-Schmidt operator}. Since $\mathcal{A}$ is coercive, its inverse $\mathcal{A}^{-1}$ is well-defined on the range $\mathcal{A}(\mathcal{L})$. Moreover, $\mathcal{A}^{-1}$ can be expressed explicitly as follows: for every $u\in L_2$ and $x\in\mathbb{I}$, 
    \begin{equation}
    \label{integral_representation}
        \mathcal{A}^{-1}u(x) = (-1)^r\int_{-1}^1 K(x,\xi) u(\xi)\,{\rm d}\xi,
    \end{equation}
    with the kernel 
    \[
        K(x,\xi) = \frac{(x-\xi)_+^{2r-1}}{(2r-1)!} - \left(\frac{(1-\xi)^{2r-1}}{(2r-1)!},\frac{(1-\xi)^{2r-2}}{(2r-2)!},\ldots,\frac{(1-\xi)^{r}}{r!}\right) \cdot F(x),
    \]
    where $f_+ = \max\{f;0\}$ and $F(x)$ is the column-vector
    \[
        F(x) = \left(\begin{array}{cccc}
            \frac{2^r}{r!} & \frac {2^{r-1}}{(r-1)!} & \ldots & \frac{2^1}{1!}\\
            \frac{2^{r+1}}{(r+1)!} & \frac{2^r}{r!} & \ldots & \frac{2^2}{2!}\\
            \vdots & \vdots & \ddots & \vdots\\
            \frac{2^{2r-1}}{(2r-1)!} & \frac{2^{2r-2}}{(2r-2)!} & \ldots & \frac{2^r}{r!}
        \end{array}\right)^{-1}\left(\begin{array}{c}\frac{(x+1)^r}{r!}\\\frac{(x+1)^{r+1}}{(r+1)!}\\\vdots\\\frac{(x+1)^{2r-1}}{(2r-1)!}\end{array}\right).
    \]
    Indeed, denote the right hand part of~\eqref{integral_representation} by $f$. Clearly, $f(-1) = f'(-1) = \ldots = f^{(r-1)}(-1) = 0$. For $s=0,1,\ldots,r-1$, $F^{(s)}(1)$ is column-vector having $1$ in the $(s+1)$-th row and $0$'s in all other rows. Hence, $f^{(s)}(1) = 0$, $s = 0,1,\ldots,r-1$. Hence, $f\in\mathcal{L}$ and $\mathcal{A}f = (-1)^r f^{(2r)} = u$. 
    
    Next, since the kernel $K$ is bounded, we have $\int_{-1}^1\int_{-1}^1 K^2(x,\xi)\,{\rm d}x\,{\rm d}\xi < \infty$. Therefore, operator $\mathcal{A}^{-1}:L_2 \to \mathcal{L}$ is Hilbert-Schmidt operator. 
\end{itemize}

Since $\mathcal{A}$ is coercive self-adjoint operator whose range coincide with $L_2$, its inverse $\mathcal{A}^{-1}$ is positive self-adjoint operator. Also, $\mathcal{A}^{-1}$ is Hilbert-Schmidt operator and, hence, compact operator. By~\cite[Section XI \S9, Theorem~1]{Yos_95} $\mathcal{A}^{-1}$ has discrete spectrum $\gamma_1 \geqslant \gamma_2 \geqslant \ldots > 0$ accumulating only at $0$. Moreover, the system $\Phi = \left\{\varphi_n\right\}_{n=1}^\infty\subset C^\infty\cap \mathcal{L}$ of normed eigen-functions, {\it i.e.} $\left\|\varphi_n\right\|_2 = 1$, corresponding to eigen-values $\gamma_n$'s  is basis in $L_2$, and 
\[
    \mathcal{A}^{-1}f = \sum\limits_{n=1}^\infty \gamma_n \left(f, \varphi_n\right) \varphi_n,\qquad f = \sum\limits_{n=1}^\infty \left(f, \varphi_n\right) \varphi_n\in L_2.
\]
Denoting $\lambda_n = \gamma_n^{-1}$, we obtain that, for every function $u\in \mathcal{L}$, 
\[
    \mathcal{A}u = \sum\limits_{n=1}^\infty \lambda_n \left(u,\varphi_n\right)\varphi_n,\qquad u = \sum\limits_{n=1}^\infty \left(u,\varphi_n\right)\varphi_n.
\]

Observe that system $\Phi_r = \left\{\varphi^{(r)}_n\right\}_{n=1}^\infty$ is orthogonal system and $\Phi_r\perp\mathcal{P}_{r-1}$. Indeed, for every $n\in\mathbb{N}$ and $Q \in \mathcal{P}_{r-1}$, 
\[
    \left(Q, \varphi_n^{(r)}\right) = \int_{-1}^1 Q(x) \varphi_n^{(r)}(x)\,{\rm d}x = (-1)^r\int_{-1}^1Q^{(r)}(x)\varphi_n(x)\,{\rm d}x = 0,
\]
and, for every $m,n\in\mathbb{N}$,
\[
    \left(\varphi_n^{(r)},\varphi_m^{(r)}\right) = \int_{-1}^1\varphi_n(x)(-1)^r\varphi_m^{(2r)}(x)\,{\rm d}x = \left(\varphi_n,\varphi_m\right) = \lambda_m\delta_{m,n}.
\]

Now, consider the case $\lambda = 0$. Clearly, there exists a polynomial $u_0\in\mathcal{P}_{2r-1}$ satisfying the boundary conditions of problem~\eqref{Sturm-Liouville}. Then the polynomial $u_0^{(r)}$ is extremal in the Markov-Nikolskii inequality
\[
    \left|Q^{(k)}(-1)\right| \leqslant M_{-1} \|Q\|_2,\qquad Q\in\mathcal{P}_{r-1}.
\]
Indeed, integrating by parts and applying the Schwarz inequality, we obtain
\[
    \left|Q^{(k)}(-1)\right| = \left|\int_{-1}^1Q(x)u_0^{(r)}(x)\,{\rm d}x\right| \leqslant \left\|u_0^{(r)}\right\|_2\cdot \|Q\|_2,\qquad\forall Q\in\mathcal{P}_{r-1}.
\]
Above inequality turns into equality on polynomial $u_0^{(r)}$. Hence, $\left\|u_0^{(r)}\right\|_2 = M_{-1}$.

Next, let $\lambda > 0$. Substituting $u = v + u_0$ into problem~\eqref{Sturm-Liouville}, we obtain that the function $v$ belongs to the space $\mathcal{L}$ and satisfies equation
\begin{equation}
\label{Sturm-Liouville-inhom}
    (-1)^rv^{(2r)} + \lambda v = -\lambda u_0.
\end{equation}
It is not difficult to see that the function
\[
    v = \sum\limits_{n=1}^\infty \frac{-\lambda \left(u_0,\varphi_n\right)}{\lambda + \lambda_n}\, \varphi_n
\]
delivers the desired solution to equation~\eqref{Sturm-Liouville-inhom}. Indeed, $v\in\mathcal{L}$ as
\[
    \left\|\mathcal{A}v\right\|_2^2 = \sum\limits_{n=1}^\infty \frac{\lambda^2\lambda_n^2\left|\left(u_0,\varphi_n\right)\right|^2}{(\lambda + \lambda_n)^2} \leqslant \lambda^2 \|u_0\|_2^2,
\]
and 
\begin{gather*}
    (-1)^rv^{(2r)} + \lambda v = Av + \lambda v = \sum\limits_{n=1}^\infty \frac{-\lambda \lambda_n (u_0,\varphi_n)}{\lambda + \lambda_n} + \sum\limits_{n=1}^\infty \frac{-\lambda^2 (u_0,\varphi_n)}{\lambda + \lambda_n} \\ = -\lambda \sum\limits_{n=1}^\infty \left(u_0,\varphi_n\right)\,\varphi_n = -\lambda u_0.
\end{gather*}

Therefore, the solution $u_\lambda$ to the problem~\eqref{Sturm-Liouville} exists and can be represented in the form of the series
\begin{equation}
\label{series_representation}
    u_\lambda = v + u_0 = \sum\limits_{n=1}^\infty \frac{\lambda_n\left(u_0,\varphi_n\right)}{\lambda + \lambda_n}\,\varphi_n.
\end{equation}
Formula~\eqref{series_representation} holds in the case $\lambda = 0$ as well. Uniqueness of $u_\lambda$ follows from the fact that the difference of any two distinct solutions to problem~\eqref{Sturm-Liouville} belongs to $\mathcal{L}$ and is an eigen-function of operator $\mathcal{A}$ corresponding to some non-positive eigen-value $-\lambda$, which is impossible.

Finally, we turn to the proof of other assertions in Lemma~\ref{pointwise_t_0_representation}. Note that:
\begin{equation}
\label{norm_equality}
    \left\|u_\lambda\right\|_2^2 = \sum\limits_{n=1}^\infty \frac{\lambda_n^2\left|\left(u_0,\varphi_n\right)\right|^2}{\left(\lambda + \lambda_n\right)^2}
\end{equation}
and, since $u_0^{(r)}\in\mathcal{P}_{r-1}$, $\Phi_r$ is orthogonal system and $\Phi_r\perp\mathcal{P}_{r-1}$, we have
\begin{equation}
\label{norm_derivative_equality}
    \left\|u^{(r)}_\lambda\right\|_2^2 = \left\|u_0^{(r)}\right\|_2^2 + \left\|v^{(r)}\right\|_2^2 = \left\|u_0^{(r)}\right\|_2^2 + \sum\limits_{n=1}^\infty \frac{\lambda^2\lambda_n\left|\left(u_0,\varphi_n\right)\right|^2}{(\lambda + \lambda_n)^2}.
\end{equation}

Evidently, $\left\|u_\lambda^{(r)}\right\|_2$ continuously increases in $\lambda$, $\left\|u_0^{(r)}\right\|_2 = M_{-1}$ and $\left\|u_\lambda\right\|_2$ continuously decreases in $\lambda$, and $\lim\limits_{\lambda\to +\infty}\left\|u_\lambda\right\|_2 = 0$. $\qedsymbol$

\subsection{Proofs of main results of Section~\ref{pointwise_t_0}}

\begin{proof}[Proof of Theorem~\ref{thm_pointwise_t_0}]
    By Lemma~\ref{pointwise_t_0_representation}, for every $\lambda\geqslant 0$, there exists a function $u_\lambda\in L_2^{2r}$ delivering the solution to problem~\eqref{Sturm-Liouville}. For every $f\in L_2^r$, 
    \[
        \left|f^{(k)}(-1)\right| \leqslant \displaystyle \left|\int_{-1}^1 u_\lambda^{(r)}(x)f(x)\,{\rm d}x\right| + \left|f^{(k)}(-1) - \int_{-1}^1 u_\lambda^{(r)}(x)f(x)\,{\rm d}x\right| =: I_1 + I_2.
    \]
    Integrating by parts and accounting for boundary conditions of problem~\eqref{Sturm-Liouville}, 
    \begin{gather*}
        \int_{-1}^1u_\lambda^{(r)}(x)f(x)\,{\rm d}x = \sum\limits_{j=0}^{r-1} (-1)^{j}\left.\left(u_\lambda^{(r-1-j)}(x)f^{(j)}(x)\right)\right|_{-1}^1 + (-1)^{r}\int_{-1}^1 u_{\lambda}(x)f^{(r)}(x)\,{\rm d}x \\ 
        = f^{(k)}(-1) + (-1)^{r}\int_{-1}^1u_\lambda(x)f^{(r)}(x)\,{\rm d}x.
    \end{gather*}
    Substituting above relation into $I_2$ and applying the Schwarz inequality, we have
    \begin{equation}
    \label{pointwise_inequality}
        \left|f^{(k)}(-1)\right| \leqslant \left\|u_\lambda^{(r)}\right\|_2\|f\|_2 + \left\|u_\lambda\right\|_2\left\|f^{(r)}\right\|_2.
    \end{equation}
    Let us show that inequality~\eqref{pointwise_inequality} is sharp. For $\lambda > 0$, the function $f_\lambda := u_\lambda^{(r)}$ is extremal in~\eqref{pointwise_inequality} as $\left\|f_\lambda\right\|_2 = \left\|u_\lambda^{(r)}\right\|_2$ and $f_\lambda^{(r)} = u^{(2r)}_\lambda = (-1)^{r-1}\lambda u_\lambda$ and $\left\|f_\lambda^{(r)}\right\|_2 = \lambda \left\|u_\lambda\right\|_2$, and
    \begin{gather*}
        f^{(k)}_\lambda(-1) = \int_{-1}^1 u_\lambda^{(r)}(x)f_\lambda(x)\,{\rm d}x + \left(f^{(k)}_\lambda(-1) - \int_{-1}^1 u_\lambda^{(r)}(x)f_\lambda(x)\,{\rm d}x\right)\\
        = \displaystyle \left\|u_\lambda^{(r)}\right\|_2^2 + (-1)^{r-1}\int_{-1}^1 u_\lambda(x)f_\lambda^{(r)}(x)\,{\rm d}x \\
        = \left\|u_\lambda^{(r)}\right\|_2^2 + \lambda \left\|u_\lambda\right\|_2^2 = \left\|u_\lambda^{(r)}\right\|_2\left\|f_\lambda\right\|_2 + \left\|u_\lambda\right\|_2\left\|f_\lambda^{(r)}\right\|_2.
    \end{gather*}
    In the case $\lambda = 0$, consider the limit and apply relation~\eqref{norm_derivative_equality}:
    \begin{gather*}
        \lim\limits_{\mu\to 0^+} \frac{f_\mu^{(k)}(-1) - \left\|u_0^{(r)}\right\|_2\left\|f_\mu\right\|_2}{\left\|f_\mu^{(r)}\right\|_2} = \lim\limits_{\mu\to 0^+} \frac{\left\|u_\mu^{(r)}\right\|_2^2 + \mu\left\|u_\mu\right\|_2^2 - \left\|u_0^{(r)}\right\|_2\left\|u_\mu^{(r)}\right\|_2}{\mu \left\|u_0\right\|_2}\\
        = \left\|u_0\right\|_2 + \lim\limits_{\mu\to 0+}\frac{\left\|u_\mu^{(r)}\right\|_2\left(\left\|u_\mu^{(r)}\right\|_2^2 - \left\|u_0^{(r)}\right\|_2^2\right)}{4\mu\left\|u_0\right\|_2 \left\|u_0^{(r)}\right\|_2} \\
        = \left\|u_0\right\|_2 + \frac{1}{4\left\|u_0\right\|_2}\lim\limits_{\mu\to 0+}\frac{1}{\mu}\sum\limits_{n=1}^\infty \frac{\mu^2\lambda_n\left|\left(u_0,\varphi_n\right)\right|^2}{(\mu + \lambda_n)^2} = \left\|u_0\right\|_2.
    \end{gather*}
    Therefore, inequality~\eqref{pointwise_inequality} is sharp. It remains to show that $\left\|u_\lambda^{(r)}\right\|_2$ attains all values in $\left[M_{-1}, +\infty\right)$. By Lemma~\ref{pointwise_t_0_representation},  $\left\|u_\lambda^{(r)}\right\|_2$ continuously increases in $\lambda$ and $\left\|u_0^{(r)}\right\|_2 = M_{-1}$. So, we need only to prove that $\lim\limits_{\lambda\to+\infty}\left\|u_\lambda^{(r)}\right\|_2 = +\infty$. For $n\in\mathbb{N}$, consider the function $f_n = \cos{\left(\pi n ((\cdot)+1) +\frac{\pi k}{2}\right)}$. Clearly, $\|f_n\|_2 \leqslant 1$, $\left\|f_n^{(r)}\right\|_2\leqslant (\pi n)^r$ and $\left|f_n(-1)\right| = (\pi n)^k$. Assume there exists $C > 0$ such that $\left\|u_\lambda^{(r)}\right\|_2 < C$, for every $\lambda > 0$. Choose $n\in\mathbb{N}$ and $\lambda > 0$ such that $(\pi n)^k > 2C$ and $(\pi n)^r\left\|u_\lambda\right\|_2\leqslant C$. Then $\left|f^{(k)}_n(-1)\right| > 2C = C + C > \left\|u^{(r)}_\lambda\right\|_2\left\|f_n\right\|_2 + \left\|u_\lambda\right\|_2\left\|f_n^{(r)}\right\|_2$, which contradicts to inequality~\eqref{pointwise_inequality}. The proof of Theorem~\ref{thm_pointwise_t_0} is finished.
\end{proof}

\begin{proof}[The proof of Theorem~\ref{thm_landau_kolmogorov_t_0}]
    Let us verify that the function 
    \[
        f(\lambda) = \frac{\left\|u_\lambda^{(r)}\right\|_2^2}{\left\|u_\lambda^{(2r)}\right\|^2_2} = \frac{\left\|u_\lambda^{(r)}\right\|_2^2}{\lambda^2\left\|u_\lambda\right\|_2^2},\qquad \lambda > 0,
    \]
    decreases in $\lambda$ and attains all positive values. Indeed, taking into account relations~\eqref{norm_equality} and~\eqref{norm_derivative_equality}, we obtain
    \[
        f(\lambda) = \frac{\left\|u_0^{(r)}\right\|_2^2}{\sum\limits_{n=1}^\infty \frac{\lambda^2\lambda_n^2\left|\left(u_0,\varphi_n\right)\right|^2}{(\lambda + \lambda_n)^2}} + \frac{\sum\limits_{n=1}^\infty \frac{\lambda_n\left|\left(u_0,\varphi_n\right)\right|^2}{(\lambda + \lambda_n)^2}}{\sum\limits_{n=1}^\infty \frac{\lambda_n^2\left|\left(u_0,\varphi_n\right)\right|^2}{(\lambda + \lambda_n)^2}} =: f_1(\lambda) + f_2(\lambda).
    \]
    Clearly, $f_1$ decreases in $\lambda$. Let us show that $f_2$ is non-increasing in $\lambda$. Consider derivative $f_2'$:
    \begin{gather*}
        f'_2(\lambda) = -2\cdot\frac{\sum\limits_{n=1}^\infty \frac{\lambda_n\left|\left(u_0,\varphi_n\right)\right|^2}{(\lambda + \lambda_n)^3}\cdot \sum\limits_{n=1}^\infty \frac{\lambda_n^2\left|\left(u_0,\varphi_n\right)\right|^2}{(\lambda + \lambda_n)^2} - \sum\limits_{n=1}^\infty \frac{\lambda_n\left|\left(u_0,\varphi_n\right)\right|^2}{(\lambda + \lambda_n)^2}\cdot \sum\limits_{n=1}^\infty \frac{\lambda_n^2\left|\left(u_0,\varphi_n\right)\right|^2}{(\lambda + \lambda_n)^3}}{\left(\sum\limits_{n=1}^\infty \frac{\lambda_n^2\left|\left(u_0,\varphi_n\right)\right|^2}{(\lambda + \lambda_n)^2}\right)^2} \\
        = -2 \cdot \frac{\sum\limits_{n,m=1}^\infty \left(\frac{\lambda_n\left|\left(u_0,\varphi_n\right)\right|^2}{(\lambda + \lambda_n)^3}\cdot \frac{\lambda_m^2\left|\left(u_0,\varphi_m\right)\right|^2}{(\lambda + \lambda_m)^2} - \frac{\lambda_n\left|\left(u_0,\varphi_n\right)\right|^2}{(\lambda + \lambda_n)^2}\cdot \frac{\lambda_m^2\left|\left(u_0,\varphi_m\right)\right|^2}{(\lambda + \lambda_m)^3}\right)}{\left(\sum\limits_{n=1}^\infty \frac{\lambda_n^2\left|\left(u_0,\varphi_n\right)\right|^2}{(\lambda + \lambda_n)^2}\right)^2}\\
        = -2 \cdot \frac{\sum\limits_{n,m=1}^\infty \frac{\lambda_n\lambda_m\left|\left(u_0,\varphi_n\right)\right|^2\left|\left(u_0,\varphi_m\right)\right|^2}{(\lambda + \lambda_n)^3(\lambda+\lambda_m)^3} \cdot \lambda_m(\lambda_m - \lambda_n)}{\left(\sum\limits_{n=1}^\infty \frac{\lambda_n^2\left|\left(u_0,\varphi_n\right)\right|^2}{(\lambda + \lambda_n)^2}\right)^2} \\
        = -2 \cdot \frac{\sum\limits_{m=n+1}^\infty\sum\limits_{n=1}^\infty \frac{\lambda_n\lambda_m\left|\left(u_0,\varphi_n\right)\right|^2\left|\left(u_0,\varphi_m\right)\right|^2}{(\lambda + \lambda_n)^3(\lambda+\lambda_m)^3} \cdot (\lambda_m - \lambda_n)^2}{\left(\sum\limits_{n=1}^\infty \frac{\lambda_n^2\left|\left(u_0,\varphi_n\right)\right|^2}{(\lambda + \lambda_n)^2}\right)^2} \leqslant 0.
    \end{gather*}
    Hence, $f_2$ is non-increasing and, as result, $f$ is strictly decreasing on $(0,+\infty)$. Continuity of $f$ follows from Lemma~\ref{pointwise_t_0_representation}. Consider limit cases $\lambda\to 0^+$ and $\lambda\to+\infty$. Observe that as $u_0\not\in\mathcal{L}$, it follows that
    \[
        \sum\limits_{n=1}^\infty \lambda_n^2 \left|\left(u_0,\varphi_n\right)\right|^2 = +\infty.
    \]
    Hence,
    \[
        \lim\limits_{\lambda\to 0+} f(\lambda) = \frac{\left\|u_0^{(r)}\right\|^2_2}{\lambda^2 \left\|u_0\right\|^2_2} = +\infty
    \]
    and
    \[
        \lim\limits_{\lambda \to +\infty} f(\lambda) = \lim\limits_{\lambda \to +\infty} \frac{\sum\limits_{n=1}^\infty \frac{\lambda^2\lambda_n\left|\left(u_0,\varphi_n\right)\right|^2}{(\lambda + \lambda_n)^2}}{\sum\limits_{n=1}^\infty \frac{\lambda^2\lambda_n^2\left|\left(u_0,\varphi_n\right)\right|^2}{(\lambda + \lambda_n)^2}} \leqslant \lim\limits_{\lambda\to +\infty} \sqrt{\frac{\left\|u_\lambda\right\|_2^2}{\sum\limits_{n=1}^\infty \frac{\lambda^2\lambda_n^2\left|\left(u_0,\varphi_n\right)\right|^2}{(\lambda + \lambda_n)^2}}} = 0.
    \]
    From the above and continuity, and monotony of $f$ it follows that, for every $\delta > 0$, there exists unique $\lambda > 0$ such that $\delta^2 = f(\lambda)$. We conclude the argument by observing that by inequality~\eqref{pointwise_inequality},
    \[
        \Omega_{-1}(\delta) \leqslant \left\|u_\lambda^{(r)}\right\|_2 \delta + \left\|u_\lambda\right\|_2,
    \]
    and recalling that the function $f = \frac{u_\lambda^{(r)}}{\lambda\left\|u_\lambda\right\|_2}$ is extremal in~\eqref{pointwise_inequality}, $\|f\|_2 = \delta$ and $f\in W^r_2$. It remains to apply equalities~\eqref{function_best_recovery} to finish the proof.
\end{proof}

\begin{proof}[The proof of Theorem~\ref{stechkin_pointwise_t_0}]
    Clearly, $E_N\left(D_{-1}^k;W^r_2\right) = +\infty$, for every $N < M_{-1}$. For every $N\geqslant M_{-1}$, by Theorem~\ref{thm_pointwise_t_0} there exists $\lambda \geqslant 0$ such that $N = \left\|u_\lambda^{(r)}\right\|_2$. By~\eqref{pointwise_stechkin_equality}, we have 
    \[
        E_N\left(D^k_{-1};W^r_2\right) \geqslant \Omega_{-1}\left(\frac{\left\|u_\lambda^{(r)}\right\|_2}{\lambda\left\|u_\lambda\right\|_2}\right) - N \cdot \frac{\left\|u_\lambda^{(r)}\right\|_2}{\lambda\left\|u_\lambda\right\|_2} = \left\|u_\lambda\right\|_2.
    \]
    To finish the proof, we follow the proof of inequality~\eqref{pointwise_inequality} and obtain 
    \[
        U\left(D_{-1}^k; S_{N,-1};W^r_2\right) = \sup\limits_{f\in W^r_2}\left|f^{(k)}(-1) - \int_{-1}^1 u_\lambda^{(r)}(x)f(x)\,{\rm d}x\right| \leqslant \left\|u_\lambda\right\|_2. \qedhere
    \]
\end{proof}

\section{Case $t\in(-1,1)$}
\label{pointwise_t_any}

In this section we will follow the ideas from the previous section. For $\lambda \geqslant 0$, consider boundary value problem
\begin{equation}
\label{Sturm-Liouville_t}
    \left\{\begin{array}{ll}
        (-1)^ru^{(2r)}(x) + \lambda u(x) = 0, & x\in(-1,t)\cup(t,1),\\
        u^{(s)}(-1) = u^{(s)}(1) = 0, & s=0,1,\ldots,r-1,\\
        u^{(s)}(t+0) - u^{(s)}(t-0) = (-1)^{k-1}\delta_{r-k-1,s}, & s=0,1,\ldots,2r-1.
    \end{array}\right.
\end{equation}
By $u_{\lambda,t}\in L_2^{2r}((-1,t))\cap L_2^{2r}((t,1))$ denote a solution to problem~\eqref{Sturm-Liouville_t}. Note that functions $u_{\lambda,t}^{(s)}$, $s=0,1,\ldots,r-k-2,r-k,\ldots,2r-1$, can be extended by continuity on the interval $\mathbb{I}$. For brevity, we keep notation $u^{(s)}_{\lambda,t}$ for this extension. Some properties of $u_{\lambda,t}$ are summarized in the following proposition.

\begin{lemma}
\label{pointwise_t_representation}
    Let $r\in\mathbb{N}$, $k\in\mathbb{Z}_+$, $k\leqslant r-1$, and $t\in (-1,1)$. Then
    \begin{enumerate}
        \item problem~\eqref{Sturm-Liouville_t} has a unique solution $u_{\lambda,t}\in L_2^{2r}((-1,t))\cap L_2^{2r}((t,1))$, for every $\lambda \geqslant 0$; 
        \item the function $\left\|u_{\lambda,t}^{(r)}\right\|_2$ continuously increases in $\lambda$ and $\left\|u_{0,t}^{(r)}\right\|_2 = M_t$;
        \item the function $\left\|u_{\lambda,t}\right\|_2$ continuously decreases in $\lambda$ and $\lim\limits_{\lambda\to+\infty}\left\|u_{0,t}\right\|_2 = 0$.
    \end{enumerate}
\end{lemma}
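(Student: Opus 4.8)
The plan is to mirror the structure of the proof of Lemma~\ref{pointwise_t_0_representation} as closely as possible, replacing the operator $\mathcal{A}$ acting on the full interval by an analysis that accounts for the interior jump condition at $x=t$. The essential new feature is that the solution $u_{\lambda,t}$ is no longer globally smooth: it solves the homogeneous equation $(-1)^r u^{(2r)} + \lambda u = 0$ separately on $(-1,t)$ and $(t,1)$, and the prescribed jumps in the derivatives at $t$ play the role that the boundary data at $-1$ played before.

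\medskip

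\emph{Existence and uniqueness.} First I would recast problem~\eqref{Sturm-Liouville_t} as an inhomogeneous problem on the whole interval. Since the jump conditions stipulate $u^{(s)}(t+0)-u^{(s)}(t-0) = (-1)^{k-1}\delta_{r-k-1,s}$ for $s=0,\ldots,2r-1$, a single nonzero jump occurs at $s=r-k-1$; this is precisely the statement that, in the distributional sense, $(-1)^r u_{\lambda,t}^{(2r)} + \lambda u_{\lambda,t} = (-1)^{k-1}(-1)^{2r-1-(r-k-1)}\,\delta_t^{(r+k)}$ up to a sign, i.e.\ the right-hand side is a derivative of the Dirac mass at $t$. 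Equivalently, I would construct $u_{\lambda,t}$ explicitly: on each subinterval the general solution of the $2r$-th order constant-coefficient ODE $(-1)^ru^{(2r)}+\lambda u=0$ is a known $2r$-dimensional space (exponentials/trigonometric-hyperbolic combinations governed by the $2r$-th roots of $(-1)^{r-1}\lambda$), and the $r$ boundary conditions at $-1$, the $r$ boundary conditions at $1$, together with the $2r$ matching/jump conditions at $t$, give $4r$ linear conditions for the $4r$ unknown coefficients. Uniqueness for $\lambda\geqslant 0$ follows exactly as in Lemma~\ref{pointwise_t_0_representation}: the difference of two solutions has no jump, hence lies in $\mathcal{L}$ and is an eigenfunction of $\mathcal{A}$ with eigenvalue $-\lambda\leqslant 0$, which is impossible since $\mathcal{A}$ is coercive with positive spectrum. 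The value $\lambda=0$ gives the piecewise-polynomial solution whose $r$-th derivative is the Markov--Nikolskii extremal at $t$, yielding $\bigl\|u_{0,t}^{(r)}\bigr\|_2 = M_t$ by the same integration-by-parts argument as in Section~\ref{pointwise_t_0}.

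\medskip

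\emph{Monotonicity and limits.} For the series representation I would expand $u_{\lambda,t} = v + u_{0,t}$, where $v\in\mathcal{L}$ satisfies $(-1)^r v^{(2r)} + \lambda v = -\lambda u_{0,t}$ (the jump having been absorbed into $u_{0,t}$), and use the eigenbasis $\Phi = \{\varphi_n\}$ of $\mathcal{A}$ to write
\[
    u_{\lambda,t} = \sum_{n=1}^\infty \frac{\lambda_n\,(u_{0,t},\varphi_n)}{\lambda+\lambda_n}\,\varphi_n,
\]
in complete analogy with~\eqref{series_representation}. Then~\eqref{norm_equality} and~\eqref{norm_derivative_equality} carry over verbatim with $u_0$ replaced by $u_{0,t}$, using that $u_{0,t}^{(r)}$ restricted to each subinterval is a polynomial of degree $\leqslant r-1$ so that $\Phi_r\perp u_{0,t}^{(r)}$ still splits the norm orthogonally. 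Monotone increase of $\bigl\|u_{\lambda,t}^{(r)}\bigr\|_2$ and monotone decrease of $\bigl\|u_{\lambda,t}\bigr\|_2$, together with $\bigl\|u_{\lambda,t}\bigr\|_2\to 0$, then follow by inspecting the $\lambda$-dependence of each summand exactly as before, since $\frac{\lambda^2\lambda_n}{(\lambda+\lambda_n)^2}$ increases and $\frac{\lambda_n^2}{(\lambda+\lambda_n)^2}$ decreases in $\lambda$.

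\medskip

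\emph{Main obstacle.} The delicate point is verifying that the decomposition $u_{0,t}\in\mathcal{P}_{r-1}$-derivative structure survives the presence of the jump, i.e.\ that $u_{0,t}^{(r)}$ really is piecewise-polynomial of the correct degree and orthogonal to $\Phi_r$, and — more importantly — that $u_{0,t}\notin\mathcal{L}$ so that $\sum_n \lambda_n^2|(u_{0,t},\varphi_n)|^2 = +\infty$, which is what forces the strict limiting behaviour. I expect the genuinely new work to lie in the explicit construction of $u_{0,t}$ and in checking that the jump datum does not accidentally place $u_{0,t}$ into the domain $\mathcal{L}$; once the eigenfunction expansion is in hand, the monotonicity and limit claims reduce to the same elementary estimates already carried out for $t=-1$. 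I would also need to confirm that the Hilbert--Schmidt/compactness machinery for $\mathcal{A}^{-1}$ is unaffected, since the operator $\mathcal{A}$ and its eigensystem $\Phi$ are intrinsic to the interval and do not depend on $t$.
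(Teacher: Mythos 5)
Your proposal follows the paper's proof essentially verbatim: the reduction to the piecewise-polynomial case $\lambda=0$ with $\bigl\|u_{0,t}^{(r)}\bigr\|_2 = M_t$ via Markov--Nikolskii extremality, the substitution $u = v + u_{0,t}$ with $v\in\mathcal{L}$ expanded in the eigenbasis of $\mathcal{A}$, the same series representation and norm identities, and the same uniqueness argument. The one point you flag as the main obstacle --- that $u_{0,t}^{(r)}$ is orthogonal to $\Phi_r$ --- resolves immediately because the only nonzero jump occurs at derivative order $r-k-1\leqslant r-1$, so all derivatives of order at least $r$ are continuous across $t$ and $u_{0,t}^{(r)}$ is in fact a single global polynomial in $\mathcal{P}_{r-1}$, exactly as the paper notes.
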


The next result delivers the solution to problem~\eqref{modulus_pointwise} in the Taikov case and to the problem of the best recovery of $D^k_t$ on class $W^r_2$ whose elements are given with an error. 

\begin{theorem}
\label{thm_landau_kolmogorov_t}
    Let $r\in\mathbb{N}$, $k\in\mathbb{Z}_+$, $k\leqslant r-1$, $t\in(-1,1)$, and $\mathscr{R} = \mathscr{O}$ or $\mathscr{R} = \mathscr{L}$. For every $\delta > 0$ there exists a unique $\lambda = \lambda(\delta)> 0$ such that $\delta\lambda\cdot \left\|u_{\lambda,t}\right\|_2 = \left\|u^{(r)}_{\lambda,t}\right\|_2$, and there hold the series of equalities
    \[
        \Omega_t(\delta) := \Omega\left(\delta;D_t^k;W^r_2\right) =\mathcal{E}_\delta\left(\mathscr{R};D^k_t;W^r_2\right) = \left\|u^{(r)}_{\lambda,t}\right\|_2\delta + \left\|u_{\lambda,t}\right\|_2.
    \]
\end{theorem}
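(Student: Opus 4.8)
The plan is to follow the blueprint of the proof of Theorem~\ref{thm_landau_kolmogorov_t_0} line by line, the only genuinely new work being the bookkeeping forced by the interior jump at $t$. First I would establish the sharp pointwise inequality
\[
    \left|f^{(k)}(t)\right| \leqslant \left\|u_{\lambda,t}^{(r)}\right\|_2\|f\|_2 + \left\|u_{\lambda,t}\right\|_2\left\|f^{(r)}\right\|_2,\qquad f\in L_2^r.
\]
To this end I would compute $\int_{-1}^1 u_{\lambda,t}^{(r)}(x)f(x)\,{\rm d}x$ by integrating by parts $r$ times over each of $(-1,t)$ and $(t,1)$ separately. The boundary contributions at $\pm 1$ vanish because $u_{\lambda,t}^{(s)}(\pm 1)=0$ for $s=0,\ldots,r-1$, while the contributions at $t$ collapse to a single sum over the jumps $u_{\lambda,t}^{(s)}(t+0)-u_{\lambda,t}^{(s)}(t-0)$. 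The jump conditions in~\eqref{Sturm-Liouville_t} are calibrated exactly so that only the index $s=r-k-1$ survives, producing precisely the term $f^{(k)}(t)$; hence $\int_{-1}^1 u_{\lambda,t}^{(r)}f = f^{(k)}(t) + (-1)^r\int_{-1}^1 u_{\lambda,t}f^{(r)}$, and the Schwarz inequality yields the display. Sharpness follows exactly as in the proof of Theorem~\ref{thm_pointwise_t_0}: the function $f_\lambda=u_{\lambda,t}^{(r)}$, which extends continuously across $t$ as noted after~\eqref{Sturm-Liouville_t}, satisfies $f_\lambda^{(r)}=(-1)^{r-1}\lambda u_{\lambda,t}$ and realizes equality, with the limiting case $\lambda\to 0^+$ handled as before.

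The heart of the argument is to reproduce the two series identities~\eqref{norm_equality} and~\eqref{norm_derivative_equality} for the jump problem. I would write $u_{\lambda,t}=u_{0,t}+v$, where $u_{0,t}$ is the piecewise-polynomial solution at $\lambda=0$ and $v$ cancels its jump, so that $v\in\mathcal{L}$ and $(-1)^rv^{(2r)}+\lambda v = -\lambda u_{0,t}$ on $(-1,t)\cup(t,1)$. The crucial observation is that $u_{0,t}^{(r)}\in\mathcal{P}_{r-1}$: the prescribed jump sits in the derivative of order $r-k-1<r$, so it is annihilated after $r$ differentiations and $u_{0,t}^{(r)}$ is in fact a single polynomial of degree at most $r-1$. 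This immediately gives $u_{0,t}^{(r)}\perp\Phi_r$ and, by the Markov-Nikolskii argument of Section~\ref{pointwise_t_0}, $\left\|u_{0,t}^{(r)}\right\|_2=M_t$. Expanding $v$ (and hence $u_{0,t}$) in the orthonormal basis $\{\varphi_n\}$ then yields the $L_2$-representation $u_{\lambda,t}=\sum_n \frac{\lambda_n(u_{0,t},\varphi_n)}{\lambda+\lambda_n}\varphi_n$, formally identical to~\eqref{series_representation}, and consequently the identities~\eqref{norm_equality} and~\eqref{norm_derivative_equality} hold verbatim with $u_0$ replaced by $u_{0,t}$.

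Once these identities are in place, the function $f(\lambda)=\left\|u_{\lambda,t}^{(r)}\right\|_2^2/(\lambda^2\left\|u_{\lambda,t}\right\|_2^2)$ is treated by exactly the computation in the proof of Theorem~\ref{thm_landau_kolmogorov_t_0}: it splits as $f_1+f_2$ with $f_1$ strictly decreasing and $f_2$ non-increasing by the same double-sum sign estimate for $f_2'$, it tends to $+\infty$ as $\lambda\to 0^+$ because $u_{0,t}\notin\mathcal{L}$ forces $\sum_n\lambda_n^2|(u_{0,t},\varphi_n)|^2=+\infty$, and it tends to $0$ as $\lambda\to+\infty$. Strict monotonicity together with the continuity from Lemma~\ref{pointwise_t_representation} then gives a unique $\lambda=\lambda(\delta)>0$ with $\delta^2=f(\lambda)$, that is $\delta\lambda\left\|u_{\lambda,t}\right\|_2=\left\|u_{\lambda,t}^{(r)}\right\|_2$. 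Feeding the normalized extremizer $f=u_{\lambda,t}^{(r)}/(\lambda\left\|u_{\lambda,t}\right\|_2)$, which lies in $W_2^r$ and has $\|f\|_2=\delta$, into the pointwise inequality shows that the latter is attained, so $\Omega_t(\delta)=\left\|u_{\lambda,t}^{(r)}\right\|_2\delta+\left\|u_{\lambda,t}\right\|_2$; the coincidence with $\mathcal{E}_\delta(\mathscr{R};D_t^k;W_2^r)$ for $\mathscr{R}=\mathscr{O}$ and $\mathscr{R}=\mathscr{L}$ is then immediate from~\eqref{function_best_recovery}.

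I expect the main obstacle to be the rigorous justification of the series representation for the jump problem. Since $u_{\lambda,t}\notin\mathcal{L}$, it lies outside the domain of $\mathcal{A}$ and one cannot apply $\mathcal{A}$ term-by-term to it; the correct route is to peel off the piecewise-polynomial part $u_{0,t}$ first, pass to the genuine element $v\in\mathcal{L}$, and only then expand. Checking that the prescribed jump really washes out under $r$ differentiations---so that $u_{0,t}^{(r)}\in\mathcal{P}_{r-1}$ and the orthogonal splitting $\left\|u_{\lambda,t}^{(r)}\right\|_2^2=M_t^2+\left\|v^{(r)}\right\|_2^2$ stays clean---is what keeps the monotonicity computation identical to the endpoint case, and is therefore the linchpin of the whole reduction.
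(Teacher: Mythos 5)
Your proposal is correct and takes essentially the same route as the paper: the paper's own proof of this theorem is a one-line reduction to the proof of Theorem~\ref{thm_landau_kolmogorov_t_0}, replacing $(u_0,\varphi_n)$ by $(u_{0,t},\varphi_n)$ and $u_\lambda$ by $u_{\lambda,t}$. The supporting facts you re-derive along the way --- the piecewise integration by parts with the jump conditions isolating $f^{(k)}(t)$, the observation that $u_{0,t}^{(r)}\in\mathcal{P}_{r-1}$ with $\|u_{0,t}^{(r)}\|_2=M_t$, and the series representation~\eqref{series_representation_t} --- are exactly the content of Lemma~\ref{pointwise_t_representation} and Theorem~\ref{thm_pointwise_t}, which the paper establishes separately and then invokes.
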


The set of pairs of sharp constants in additive inequalities~\eqref{inequality_pointwise} can be described as follows.

\begin{theorem}
\label{thm_pointwise_t}
    Let $r\in\mathbb{N}$, $k\in\mathbb{Z}_+$, $k\leqslant r-1$ and $t\in(-1,1)$. Then
    \[
        \Gamma_t := \Gamma\left(D_t^k; L_2^r\right) = \left\{\left(\left\|u^{(r)}_{\lambda,t}\right\|_2,\left\|u_{\lambda,t}\right\|_2\right),\,\lambda\geqslant 0\right\}.
    \]
\end{theorem}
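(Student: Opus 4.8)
The plan is to follow the proof of Theorem~\ref{thm_pointwise_t_0} almost verbatim, replacing the function $u_\lambda$ by $u_{\lambda,t}$ and the endpoint $-1$ by the interior point $t$. The core step is to establish, for each fixed $\lambda\geqslant 0$, the sharp pointwise inequality
\begin{equation}
    \left|f^{(k)}(t)\right| \leqslant \left\|u_{\lambda,t}^{(r)}\right\|_2\|f\|_2 + \left\|u_{\lambda,t}\right\|_2\left\|f^{(r)}\right\|_2,\qquad f\in L_2^r.
    \tag{$\star$}
\end{equation}
Granting this, the description of $\Gamma_t$ follows exactly as in the case $t=-1$: relation~\eqref{first_relation_t}, together with the Markov--Nikolskii relation recalled in the introduction (so that $B_t(A)<+\infty$ precisely when $A\geqslant M_t$), reduces the problem to showing that the coefficient $\|u_{\lambda,t}^{(r)}\|_2$ runs continuously and monotonically over the whole ray $[M_t,+\infty)$ as $\lambda$ ranges over $[0,+\infty)$, while simultaneously $\|u_{\lambda,t}\|_2$ delivers the sharp constant $B_t\bigl(\|u_{\lambda,t}^{(r)}\|_2\bigr)$.

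To prove $(\star)$ I would split $\int_{-1}^1 u_{\lambda,t}^{(r)} f$ into $\int_{-1}^t$ and $\int_t^1$ and integrate by parts $r$ times on each subinterval separately, since $u_{\lambda,t}$ is only piecewise smooth across $t$. The boundary contributions at $\pm 1$ vanish owing to $u_{\lambda,t}^{(s)}(\pm 1)=0$, $s=0,\ldots,r-1$, while the contributions at $t$ assemble into $\sum_{j=0}^{r-1}(-1)^j\bigl(u_{\lambda,t}^{(r-1-j)}(t-0)-u_{\lambda,t}^{(r-1-j)}(t+0)\bigr)f^{(j)}(t)$. By the jump conditions of~\eqref{Sturm-Liouville_t} the bracket equals $-(-1)^{k-1}\delta_{r-k-1,\,r-1-j}$, so only the term $j=k$ survives and it equals exactly $f^{(k)}(t)$. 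This yields the identity $f^{(k)}(t)=\int_{-1}^1 u_{\lambda,t}^{(r)}f-(-1)^r\int_{-1}^1 u_{\lambda,t}f^{(r)}$, and the Schwarz inequality gives $(\star)$.

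For sharpness with $\lambda>0$ I would take $f_\lambda=u_{\lambda,t}^{(r)}$. This is admissible, $f_\lambda\in L_2^r$, precisely because the jump conditions are trivial for orders $s\geqslant r$, so $u_{\lambda,t}^{(s)}$ is continuous at $t$ for $s=r,\ldots,2r-1$; and since $f_\lambda^{(r)}=u_{\lambda,t}^{(2r)}=(-1)^{r-1}\lambda u_{\lambda,t}$ on each subinterval, the same computation as in Theorem~\ref{thm_pointwise_t_0} gives $f_\lambda^{(k)}(t)=\|u_{\lambda,t}^{(r)}\|_2^2+\lambda\|u_{\lambda,t}\|_2^2$, i.e.\ equality in $(\star)$. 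The boundary case $\lambda=0$, corresponding to $A=M_t$, is handled by the same limit $\mu\to 0^+$ as in the endpoint proof, giving $B_t(M_t)=\|u_{0,t}\|_2$. Lemma~\ref{pointwise_t_representation} supplies the required monotonicity, continuity, and the value $\|u_{0,t}^{(r)}\|_2=M_t$; the only missing ingredient, $\lim_{\lambda\to+\infty}\|u_{\lambda,t}^{(r)}\|_2=+\infty$, I would obtain by testing $(\star)$ against $f_n(x)=\cos\bigl(\pi n(x-t)+\tfrac{\pi k}{2}\bigr)$, for which $|f_n^{(k)}(t)|=(\pi n)^k$ while $\|f_n\|_2$ and $(\pi n)^{-r}\|f_n^{(r)}\|_2$ stay bounded, exactly as in the proof of Theorem~\ref{thm_pointwise_t_0}.

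The only genuinely new point relative to the endpoint case, and the step I expect to require the most care, is the integration by parts across the interior discontinuity: one must track the one-sided boundary terms at $t\pm 0$ and verify that the jump conditions $u_{\lambda,t}^{(s)}(t+0)-u_{\lambda,t}^{(s)}(t-0)=(-1)^{k-1}\delta_{r-k-1,s}$, prescribed for all $s=0,\ldots,2r-1$, are tuned so that the surviving boundary term reproduces $f^{(k)}(t)$ with the correct sign, while the conditions for the orders $s\geqslant r$ simultaneously guarantee that $f_\lambda=u_{\lambda,t}^{(r)}$ is smooth enough across $t$ to be an admissible extremizer and that the remaining integral involves the genuinely continuous function $u_{\lambda,t}$. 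Everything else is parallel to the case $t=-1$.
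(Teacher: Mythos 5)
Your proposal is correct and follows essentially the same route as the paper: the same integration by parts split at $t$ using the jump conditions of~\eqref{Sturm-Liouville_t} to produce inequality~\eqref{pointwise_inequality_t}, the same extremal function $u_{\lambda,t}^{(r)}$ (with the $\mu\to 0^+$ limit for $\lambda=0$), and the same test functions $\cos\bigl(\pi n((\cdot)-t)+\tfrac{\pi k}{2}\bigr)$ to show $\bigl\|u_{\lambda,t}^{(r)}\bigr\|_2$ exhausts $[M_t,+\infty)$. The paper merely states these steps by reference to the proof of Theorem~\ref{thm_pointwise_t_0}, whereas you spell out the sign bookkeeping at $t\pm 0$ and the admissibility of the extremizer, both of which check out.
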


The following result delivers the solution to the problem on the best approximation of functional $D^k_t:L_2\to\mathbb{R}$ by linear bounded ones on class $W^r_2$.

\begin{theorem}
\label{stechkin_pointwise_t}
    Let $r\in\mathbb{N}$, $k\in\mathbb{Z}_+$, $k\leqslant r-1$ and $t\in(-1,1)$. For $N \geqslant M_t$, let $\lambda_{N,t} \geqslant 0$ be such that $N = \left\|u^{(r)}_{\lambda_{N,t},t}\right\|_2$ and consider the functional $S_{N,t}:L_2\to\mathbb{R}$:
    \[
        S_{N,t} f := \int_0^1 u_{\lambda_{N,t},t}^{(r)}(x) f(x)\,{\rm d}x,\qquad f\in L_2.
    \]
    Then, for $N\in\left(0,M_t\right)$, $E_N\left(D^k_t;W^r_2\right) = +\infty$ and, for $N\geqslant M_t$,
    \[
        E_N\left(D^k_t;W^r_2\right) = U\left(D^k_t;S_{N,t};W^r_2\right) = \left\|u_{\lambda_{N,t},t}\right\|_2.
    \]
\end{theorem}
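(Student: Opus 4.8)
The plan is to mirror the argument used for $t=-1$ in the proof of Theorem~\ref{stechkin_pointwise_t_0}, splitting into the regimes $N<M_t$ and $N\geqslant M_t$ and proving matching lower and upper bounds in the latter. First I would dispose of the case $N\in(0,M_t)$: since $M_t$ is the sharp constant in the pointwise Markov--Nikolskii inequality and, as recorded in the discussion following~\eqref{inequality_pointwise}, $B_t(N)$ is finite precisely when $N\geqslant M_t$, the identity~\eqref{pointwise_stechkin_equality} gives $E_N\left(D^k_t;W^r_2\right)=B_t(N)=+\infty$ for such $N$.

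Next, fix $N\geqslant M_t$. By part~(2) of Lemma~\ref{pointwise_t_representation} the map $\lambda\mapsto\left\|u^{(r)}_{\lambda,t}\right\|_2$ is continuous and increases from $M_t$ at $\lambda=0$, and it tends to $+\infty$ (as established in the proof of Theorem~\ref{thm_pointwise_t}), so there is a $\lambda=\lambda_{N,t}\geqslant 0$ with $N=\left\|u^{(r)}_{\lambda_{N,t},t}\right\|_2$. For the lower bound I would feed into~\eqref{pointwise_stechkin_equality} the specific error level $\delta^\ast=\left\|u^{(r)}_{\lambda,t}\right\|_2/\bigl(\lambda\left\|u_{\lambda,t}\right\|_2\bigr)$, which is exactly the value matched to $\lambda$ in Theorem~\ref{thm_landau_kolmogorov_t}. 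That theorem evaluates $\Omega_t(\delta^\ast)=\left\|u^{(r)}_{\lambda,t}\right\|_2\,\delta^\ast+\left\|u_{\lambda,t}\right\|_2$, and since $N=\left\|u^{(r)}_{\lambda,t}\right\|_2$ the linear term cancels against $N\delta^\ast$, leaving $E_N\left(D^k_t;W^r_2\right)\geqslant\Omega_t(\delta^\ast)-N\delta^\ast=\left\|u_{\lambda_{N,t},t}\right\|_2$.

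For the matching upper bound I would estimate $U\left(D^k_t;S_{N,t};W^r_2\right)=\sup_{f\in W^r_2}\left|f^{(k)}(t)-S_{N,t}f\right|$ using the integration-by-parts identity behind the sharp inequality proved for Theorem~\ref{thm_pointwise_t}. Splitting $\int_{-1}^1 u^{(r)}_{\lambda,t}(x)f(x)\,{\rm d}x$ as $\int_{-1}^t+\int_t^1$ and integrating by parts $r$ times on each piece, the boundary terms at $\pm 1$ vanish by the homogeneous conditions in~\eqref{Sturm-Liouville_t}, while the terms collected at $t$ reduce, via the jump conditions $u^{(s)}(t+0)-u^{(s)}(t-0)=(-1)^{k-1}\delta_{r-k-1,s}$, to a single surviving contribution at $j=k$ equal to $f^{(k)}(t)$. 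This yields $f^{(k)}(t)-S_{N,t}f=(-1)^{r+1}\int_{-1}^1 u_{\lambda,t}(x)f^{(r)}(x)\,{\rm d}x$, whence the Schwarz inequality gives $\left|f^{(k)}(t)-S_{N,t}f\right|\leqslant\left\|u_{\lambda,t}\right\|_2\left\|f^{(r)}\right\|_2\leqslant\left\|u_{\lambda_{N,t},t}\right\|_2$ on $W^r_2$. Combining the two bounds delivers the stated chain of equalities.

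The one genuinely new point relative to the $t=-1$ case is the bookkeeping of the interior boundary terms at $t$, and I expect the main care to lie there: one must verify that the jump conditions in~\eqref{Sturm-Liouville_t} are calibrated so that exactly the $j=k$ term survives with the correct sign, the product $(-1)^k\cdot(-1)^{k-1}=-1$ converting the collected jump into $+f^{(k)}(t)$. Once this identity is secured the remaining steps are a verbatim transcription of the $t=-1$ argument, so no further obstacle is anticipated.
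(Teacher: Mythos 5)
Your proposal is correct and follows essentially the same route as the paper: the paper's proof of this theorem is a one-line reference to the $t=-1$ argument (lower bound via~\eqref{pointwise_stechkin_equality} evaluated at $\delta=\left\|u^{(r)}_{\lambda,t}\right\|_2/\bigl(\lambda\left\|u_{\lambda,t}\right\|_2\bigr)$, upper bound via the integration-by-parts identity plus the Schwarz inequality), which is exactly what you reconstruct. Your explicit verification of the jump-term bookkeeping at $t$ (only $j=k$ survives, with $-(-1)^k(-1)^{k-1}=+1$) is precisely the detail the paper delegates to the proof of Theorem~\ref{thm_pointwise_t}, and your reading of $S_{N,t}$ as integration over $[-1,1]$ correctly repairs the paper's typo $\int_0^1$.
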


\subsection{The proof of Lemma~\ref{pointwise_t_representation}}

First, we consider the case $\lambda = 0$. Clearly, there exists a function $u_{0,t}\in\mathcal{P}_{2r-1}((-1,t))\cap \mathcal{P}_{2r-1}((t,1))$ delivering the solution to problem~\eqref{Sturm-Liouville_t}. Note that $u_{0,t}^{(r)}$ is polynomial of degree at most $r-1$ and is extremal in pointwise version of the Markov-Nikolskii inequality
\[
    \left|Q^{(k)}(t)\right| \leqslant M_t \|Q\|_2,\qquad \forall Q\in\mathcal{P}_{r-1}.
\]
Indeed, for every $Q\in\mathcal{P}_{r-1}$, there holds true inequality
\[
    \left|Q^{(k)}(t)\right| = \left|\int_{-1}^1 Q(x)u_{0,t}^{(r)}(x)\,{\rm d}x\right| \leqslant \left\|u_{0,t}^{(r)}\right\|_2\cdot \|Q\|_2,
\]
which turns into equality on the polynomial $u_{0,t}^{(r)}$. Hence, $\left\|u_{0,t}^{(r)}\right\|_2 = M_t$.

Next, let $\lambda > 0$. Substituting $u = v + u_{0,t}$ into problem~\eqref{Sturm-Liouville_t} , we obtain that $v\in\mathcal{L}$ (see definition in Subsection~\ref{ss_lemma_1}) and there holds equality
\begin{equation}
\label{Sturm-Liouville-inhom_t}
    (-1)^rv^{(2r)} + \lambda v = -\lambda u_{0,t}.
\end{equation}

Following the arguments in Subsection~\ref{ss_lemma_1}, we see that 
\[
    v = \sum\limits_{n=1}^\infty \frac{-\lambda \left(u_{0,t},\varphi_n\right)}{\lambda + \lambda_n}\, \varphi_n
\]
and, hence,
\begin{equation}
\label{series_representation_t}
    u_{\lambda,t} = v + u_{0,t} = \sum\limits_{n=1}^\infty \frac{\lambda_n\left(u_{0,t},\varphi_n\right)}{\lambda + \lambda_n}\,\varphi_n.
\end{equation}
Note that the formula~\eqref{series_representation_t} also holds true in the case $\lambda = 0$. Uniqueness of $u_{\lambda,t}$ can be established with the help of the same considerations as uniqueness of the function $u_\lambda$ (see Subsection~\ref{ss_lemma_1}). Furthermore, 
\[
    \left\|u_{\lambda,t}\right\|_2^2 = \sum\limits_{n=1}^\infty \frac{\lambda_n^2\left|\left(u_{0,t},\varphi_n\right)\right|^2}{\left(\lambda + \lambda_n\right)^2}\quad\text{and}\quad 
    \left\|u^{(r)}_{\lambda,t}\right\|_2^2 = \left\|u_{0,t}^{(r)}\right\|_2^2 + \sum\limits_{n=1}^\infty \frac{\lambda^2\lambda_n\left|\left(u_{0,t},\varphi_n\right)\right|^2}{(\lambda + \lambda_n)^2}.
\]
Evidently, $\left\|u_{\lambda,t}^{(r)}\right\|_2$ continuously increases in $\lambda$, $\left\|u_{0,t}^{(r)}\right\|_2 = M_t$ and $\left\|u_{\lambda,t}\right\|_2$ continuously decreases in $\lambda$, and $\lim\limits_{\lambda \to +\infty}\left\|u_{\lambda,t}\right\|_2 = 0$. $\qedsymbol$
    
\subsection{Proofs of main results of Section~\ref{pointwise_t_any}}

\begin{proof}[The proof of Theorem~\ref{thm_pointwise_t}]
    By Lemma~\ref{pointwise_t_representation}, for every $\lambda\geqslant 0$, there exists the solution $u_{\lambda,t}\in L_2^{2r}((-1,t))\cap L_2^{2r}((t,1))$ to the problem~\eqref{Sturm-Liouville_t}. Then, for every $f\in L_2^r$, 
    \[
        \left|f^{(k)}(t)\right| \leqslant \displaystyle \left|\int_{-1}^1 u_{\lambda,t}^{(r)}(x)f(x)\,{\rm d}x\right| + \left|f^{(k)}(t) - \int_{-1}^1 u_{\lambda,t}^{(r)}(x)f(x)\,{\rm d}x\right| =: I_1 + I_2.
    \]
    Integrating by parts and accounting for boundary conditions in~\eqref{Sturm-Liouville_t}, we obtain
    \begin{gather*}
        \int_{-1}^1u_{\lambda,t}^{(r)}(x)f(x)\,{\rm d}x\\ 
        = \sum\limits_{j=0}^{r-1} (-1)^{j}\left.\left(u_{\lambda,t}^{(r-1-j)}(x)f^{(j)}(x)\right)\right|_{-1}^{t^-} + \sum\limits_{j=0}^{r-1} (-1)^{j}\left.\left(u_{\lambda,t}^{(r-1-j)}(x)f^{(j)}(x)\right)\right|_{t^+}^1 \\ + (-1)^{r}\int_{-1}^1 u_{\lambda,t}(x)f^{(r)}(x)\,{\rm d}x \\ 
        = f^{(k)}(t) + (-1)^{r}\int_{-1}^1u_{\lambda,t}(x)f^{(r)}(x)\,{\rm d}x.
    \end{gather*}
    Substituting above relation into $I_2$ and applying the Schwarz inequality, we have
    \begin{equation}
    \label{pointwise_inequality_t}
        \left|f^{(k)}(t)\right| \leqslant \left\|u_{\lambda,t}^{(r)}\right\|_2\|f\|_2 + \left\|u_{\lambda,t}\right\|_2\left\|f^{(r)}\right\|_2.
    \end{equation}
    Sharpness of inequality~\eqref{pointwise_inequality_t} can be established in a similar way as sharpness of inequality~\eqref{pointwise_inequality} in Theorem~\ref{thm_pointwise_t_0}. 
    Similarly, to prove that $\left\|u_{\lambda,t}^{(r)}\right\|_2$ attains all values in $\left[M_t, +\infty\right)$, we can follow the same ideas as in the proof of Theorem~\ref{thm_pointwise_t_0} with function $f_n = \cos{\left(\pi n ((\cdot)-t) +\frac{\pi k}{2}\right)}$. 
\end{proof}

\begin{proof}[The proof of Theorem~\ref{thm_landau_kolmogorov_t}]
    The proof follows the proof of Theorem~\ref{thm_landau_kolmogorov_t_0} with corresponding change of the Fourier coefficients $\left(u_0,\varphi_n\right)$ by coefficients $\left(u_{0,t},\varphi_n\right)$ and of the function $u_\lambda$ by the function $u_{\lambda,t}$.
\end{proof}

\begin{proof}[The proof of Theorem~\ref{stechkin_pointwise_t}]
    The proof is identical to the proof of Theorem~\ref{stechkin_pointwise_t_0} after corresponding change of the function $u_\lambda$ with the function $u_{\lambda,t}$ and operator $S_{N,-1}$ with operator $S_{N,t}$.
\end{proof}

\section{Uniform case}
\label{uniform}

First, we formulate the Karlin-type conjectures: for the modulus of continuity~\eqref{modulus} of operator $D^k:L_2\to L_\infty$ on the class $W^r_2$:
\begin{equation}
\label{karlin_omega}
    \Omega(\delta) = \Omega\left(\delta;D^k;W^r_2\right) = \sup\limits_{t\in\mathbb{I}}\Omega_t(\delta) = \Omega_{-1}(\delta),\qquad \forall\delta\geqslant 0,
\end{equation}
and for the set $\Gamma$ of pairs of sharp constants $A, B$ in additive inequalities~\eqref{inequality}:
\begin{equation}
\label{karlin_gamma}
    \Gamma = \Gamma\left(D^k;L_2^r\right) = \Gamma_{-1}.
\end{equation}

Conjectures~\eqref{karlin_omega} and~\eqref{karlin_gamma} are confirmed for small $r$'s in the following results.

\begin{theorem}
\label{thm_landau_kolmogorov}
    Let either $r = 1$ and $k = 0$ or $r = 2$ and $k \in \{0,1\}$, and $\mathscr{R} = \mathscr{L}$ or $\mathscr{R} = \mathscr{O}$. Then, for $\delta\geqslant 0$, 
    \[
        \Omega(\delta) := \Omega\left(\delta;D^k;W^r_2\right) = \mathcal{E}_\delta\left(\mathscr{R};D^k;W^r_2\right) = \Omega_{-1}(\delta).
    \]
\end{theorem}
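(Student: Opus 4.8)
The plan is to reduce Theorem~\ref{thm_landau_kolmogorov} to the pointwise results already established, namely Theorem~\ref{thm_landau_kolmogorov_t_0} (for $t=-1$) and Theorem~\ref{thm_landau_kolmogorov_t} (for $t\in(-1,1)$), by verifying the Karlin-type identity $\sup_{t\in\mathbb{I}}\Omega_t(\delta)=\Omega_{-1}(\delta)$ in the small cases $r\in\{1,2\}$. Since each $\Omega_t(\delta)$ is already known explicitly, the whole theorem will follow once I show that, for every fixed $\delta\geqslant 0$, the supremum over $t$ is attained at the endpoint $t=-1$. By the relation~\eqref{second_relation_t}, which gives $\Omega_t(\delta)=\inf_{(A,B)\in\Gamma_t}(A\delta+B)$, it suffices in fact to compare the families of sharp-constant pairs: I will argue that for the \emph{same} parameter $\lambda\geqslant 0$ one has the simultaneous norm inequalities
\begin{equation}
\label{key_norm_comparison}
    \left\|u_{\lambda,t}\right\|_2 \leqslant \left\|u_{\lambda,-1}\right\|_2
    \quad\text{and}\quad
    \left\|u_{\lambda,t}^{(r)}\right\|_2 \leqslant \left\|u_{\lambda,-1}^{(r)}\right\|_2,
    \qquad t\in(-1,1).
\end{equation}
Granting~\eqref{key_norm_comparison}, every pair $\bigl(\|u_{\lambda,t}^{(r)}\|_2,\|u_{\lambda,t}\|_2\bigr)\in\Gamma_t$ is dominated coordinatewise by the corresponding pair in $\Gamma_{-1}$, so $A\delta+B$ is never smaller over $\Gamma_{-1}$; taking infima and then the supremum over $t$ yields $\Omega_t(\delta)\leqslant\Omega_{-1}(\delta)$, and the reverse inequality is trivial since $-1\in\mathbb{I}$. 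The identification with $\mathcal{E}_\delta(\mathscr{R};D^k;W^r_2)$ then comes for free from Proposition~\ref{thm_best_recovery} combined with~\eqref{function_best_recovery} applied at $t=-1$.

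The heart of the matter is therefore~\eqref{key_norm_comparison}, and this is where the restriction $r\in\{1,2\}$ enters. As the \emph{Main ideas} subsection indicates, the inequalities in~\eqref{key_norm_comparison} are consequences of conjecture~\eqref{extremal_eigenfunctions}, which asserts that the derivative $\varphi_n^{(r+k)}$ of each eigenfunction of $\mathcal{A}$ attains its maximal modulus at the endpoints $\pm1$. Concretely, I would expand $u_{\lambda,t}$ in the orthonormal basis $\Phi=\{\varphi_n\}$ exactly as in~\eqref{series_representation_t}, so that both $\|u_{\lambda,t}\|_2^2$ and $\|u_{\lambda,t}^{(r)}\|_2^2$ become $\lambda$-weighted series in the squared Fourier coefficients $|(u_{0,t},\varphi_n)|^2$. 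The crucial observation is that these coefficients can be rewritten via integration by parts: $(u_{0,t},\varphi_n)$ is (up to sign and the normalising eigenvalue) proportional to $\varphi_n^{(r+k)}(t)$, because $u_{0,t}$ is the Green-type piecewise-polynomial kernel whose jump of order $r-k-1$ sits at $t$. Hence $\bigl|(u_{0,t},\varphi_n)\bigr|\propto\bigl|\varphi_n^{(r+k)}(t)\bigr|$, and~\eqref{extremal_eigenfunctions} says precisely that this is maximised at $t=-1$ \emph{term by term}. Since every summand in both norm series is monotone increasing in $|(u_{0,t},\varphi_n)|^2$ for each fixed $\lambda$ and $\lambda_n$, the endpoint bound propagates to the full sums, giving~\eqref{key_norm_comparison} simultaneously for the function norm and the $r$-th derivative norm.

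The main obstacle is thus proving $\bigl|\varphi_n^{(r+k)}(t)\bigr|\leqslant\bigl|\varphi_n^{(r+k)}(-1)\bigr|$ for all $t\in\mathbb{I}$ and all $n$, which is the content of conjecture~\eqref{extremal_eigenfunctions} and which the paper itself only claims for $r\in\{1,2\}$. For these small orders the eigenfunctions of $(-1)^rD^{2r}$ under the boundary conditions $\mathcal{B}$ are explicit: for $r=1$ they are trigonometric (so $\varphi_n^{(1+k)}$ is a pure sine/cosine whose maximum is plainly at an endpoint after accounting for the Dirichlet conditions), and for $r=2$ they are the classical combinations of $\cosh,\sinh,\cos,\sin$ arising from the clamped-beam (biharmonic) eigenvalue problem. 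I would therefore solve the characteristic equations explicitly in each case, write $\varphi_n^{(r+k)}$ in closed form, and verify by a direct though somewhat delicate estimate — using the boundary data $\varphi_n^{(s)}(\pm1)=0$ for $s\leqslant r-1$ and the symmetry/antisymmetry of eigenfunctions about $0$ — that the extremum of $|\varphi_n^{(r+k)}|$ occurs at $t=\pm1$. The genuinely hard part is the $r=2$, $k=0,1$ analysis: there the derivative $\varphi_n^{(r+k)}$ is a mixed hyperbolic-trigonometric function and one must rule out interior maxima, which typically requires exploiting the transcendental eigenvalue relation (e.g.\ $\cos\mu\cosh\mu=1$ type conditions) to control the competing interior critical points. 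For $r\geqslant3$ no such closed form is available, which is exactly why the theorem is stated only for $r\in\{1,2\}$ and why~\eqref{extremal_eigenfunctions} remains a conjecture in general.
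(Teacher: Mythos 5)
Your reduction is exactly the paper's: the identity $(u_{0,t},\varphi_n)=-\varphi_n^{(r+k)}(t)/\lambda_n$, the term-by-term comparison of the norm series, and relation~\eqref{second_relation_t} together constitute Lemma~\ref{lem_karlin_as_corollary}, and your $r=1$ case of~\eqref{extremal_eigenfunctions} is handled as in Lemma~\ref{lem_small_order}. The genuine gap is that for $r=2$ --- the only hard case, and the whole reason the theorem is restricted to $r\in\{1,2\}$ --- you do not prove~\eqref{extremal_eigenfunctions}; you only announce that it can be verified by ``a direct though somewhat delicate estimate'' on the explicit clamped-beam eigenfunctions, exploiting the transcendental eigenvalue relation. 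That deferred step is precisely where the content of the theorem lies, and the route you sketch is not routine: for an even eigenfunction $\varphi(x)=\cos(\mu x)/\cos\mu-\cosh(\mu x)/\cosh\mu$ (with $\lambda=\mu^4$ and eigenvalue relation $\tan\mu=-\tanh\mu$) one has $\varphi''(\pm1)=-2\mu^2$, while the triangle-inequality bound on $|\varphi''(x)|$ is $\mu^2\left(1/|\cos\mu|+1\right)\geqslant 2\mu^2$, so the eigenvalue relation must enter in an essential way, separately for each parity and each $k\in\{0,1\}$; none of this is carried out. The paper's proof of Lemma~\ref{lem_small_order} avoids closed forms entirely with a short energy argument you could substitute: if $t_0\in(-1,1)$ is an interior extremum of $\varphi^{(2+k)}$, then $\varphi^{(3+k)}(t_0)=0$, and two integrations by parts over $[-1,t_0]$, using $\varphi^{(4)}=\lambda\varphi$ and the boundary conditions in~\eqref{sturm_liouville_bvp} (for $k=0$ the boundary term carries the factor $\varphi'(-1)=0$, for $k=1$ the factor $\varphi^{(4)}(-1)=\lambda\varphi(-1)=0$, and in both cases $\varphi^{(k)}(-1)=0$), yield $\bigl(\varphi^{(2+k)}(-1)\bigr)^2=\bigl(\varphi^{(2+k)}(t_0)\bigr)^2+\lambda\bigl(\varphi^{(k)}(t_0)\bigr)^2\geqslant\bigl(\varphi^{(2+k)}(t_0)\bigr)^2$. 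Until you either complete your explicit computation or replace it by such an argument, the proof is incomplete.

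Two further inaccuracies, both repairable. First, your claim that every summand in both norm series is monotone in $|(u_{0,t},\varphi_n)|^2$ overlooks the leading term $\|u_{0,t}^{(r)}\|_2^2=M_t^2$ in the series for $\|u_{\lambda,t}^{(r)}\|_2^2$: since $u_{0,t}^{(r)}\in\mathcal{P}_{r-1}$ is orthogonal to every $\varphi_n^{(r)}$, this term is not expressible through the coefficients $(u_{0,t},\varphi_n)$, so the eigenfunction property does not control it; you need separately that $M_t\leqslant M_{-1}$, which is Labelle's result~\cite{Lab_69}, invoked by the paper. Second, the identification with $\mathcal{E}_\delta\left(\mathscr{R};D^k;W^r_2\right)$ is not quite ``for free'' from~\eqref{function_best_recovery}: the upper bound in Proposition~\ref{thm_best_recovery} involves the Stechkin quantity $E_N\left(D^k;W^r_2\right)$ for the operator $D^k$, not the functional $D^k_{-1}$, so one must glue the pointwise optimal functionals into an operator $S_Nf(t)=S_{N,t}f$ and bound $U\left(D^k;S_N;W^r_2\right)\leqslant\sup_{t\in\mathbb{I}}\left\|u_{\lambda_{N,t},t}\right\|_2=\left\|u_{\lambda_N}\right\|_2$ via the same norm comparison, as the paper does in Theorem~\ref{thm_stechkin}.
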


\begin{theorem}
\label{thm_uniform}
    Let either $r = 1$ and $k=0$ or $r = 2$ and $k \in\{0, 1\}$. Then
    \[
        \Gamma = \Gamma\left(D^k;L^r_2\right) = \Gamma_{-1}.
    \]
\end{theorem}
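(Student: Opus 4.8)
The plan is to deduce Theorem~\ref{thm_uniform} from the relationship between the modulus of continuity $\Omega$ and the set $\Gamma$ of sharp constant pairs, combined with Theorem~\ref{thm_landau_kolmogorov} (which identifies the uniform modulus $\Omega$ with the endpoint modulus $\Omega_{-1}$ in exactly the cases $r=1,k=0$ and $r=2,k\in\{0,1\}$) and with the already-established description of the pointwise set $\Gamma_{-1}$ from Theorem~\ref{thm_pointwise_t_0}. The key observation is that the passage from moduli of continuity to sets of sharp constants is governed by the Legendre-type transform encoded in relations~\eqref{first_relation} and~\eqref{second_relation}, so once the two moduli coincide their associated constant sets must coincide as well.

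First I would invoke relation~\eqref{first_relation} to write
\[
    \Gamma = \left\{(A,B(A))\,:\,A\geqslant 0\text{ and }B(A)=\sup\limits_{\delta>0}\left(\Omega(\delta)-A\delta\right)<\infty\right\},
\]
and the analogous formula~\eqref{first_relation_t} (specialized to $t=-1$) for $\Gamma_{-1}$ in terms of $\Omega_{-1}$. Since Theorem~\ref{thm_landau_kolmogorov} gives $\Omega(\delta)=\Omega_{-1}(\delta)$ for every $\delta\geqslant 0$ in precisely the range of $(r,k)$ under consideration, the two suprema defining $B(A)$ and $B_{-1}(A)$ are literally the same supremum of the same function over $\delta>0$. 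Consequently $B(A)$ is finite exactly when $B_{-1}(A)$ is finite, and where finite they agree; this yields the equality of the two parametrized families of pairs, that is $\Gamma=\Gamma_{-1}$.

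The remaining task is to make sure the hypotheses of relation~\eqref{first_relation} genuinely apply, which hinges on concavity of $\Omega$: the equality case in~\eqref{second_relation} and the validity of~\eqref{first_relation} as an exact description (rather than merely an inclusion) require $\Omega$ to be concave. Here I would point out that $\Omega_{-1}$ is concave as a consequence of Theorem~\ref{thm_landau_kolmogorov_t_0}, where $\Omega_{-1}(\delta)=\left\|u_\lambda^{(r)}\right\|_2\delta+\left\|u_\lambda\right\|_2$ is realized as an infimum of affine functions of $\delta$ over the parameter $\lambda$, namely $\Omega_{-1}(\delta)=\inf_{(A,B)\in\Gamma_{-1}}(A\delta+B)$ via~\eqref{second_relation_t}; an infimum of affine functions is concave. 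Transporting concavity through the equality $\Omega=\Omega_{-1}$ shows $\Omega$ is concave as well, so~\eqref{second_relation} is an equality and the description~\eqref{first_relation} of $\Gamma$ is exact.

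The main obstacle is not in this deduction itself, which is essentially a transform duality, but in ensuring that Theorem~\ref{thm_landau_kolmogorov} is invoked in full strength: the equality $\Omega=\Omega_{-1}$ must hold for \emph{all} $\delta\geqslant 0$, not merely on a subinterval, since the supremum over $\delta$ in~\eqref{first_relation} ranges over the whole half-line. This is exactly what Theorem~\ref{thm_landau_kolmogorov} provides under the stated restriction on $r$ and $k$, which is why Theorem~\ref{thm_uniform} is asserted under the identical restriction. I would close by remarking that, combining $\Gamma=\Gamma_{-1}$ with the explicit parametrization $\Gamma_{-1}=\left\{\left(\left\|u_\lambda^{(r)}\right\|_2,\left\|u_\lambda\right\|_2\right):\lambda\geqslant 0\right\}$ from Theorem~\ref{thm_pointwise_t_0}, one obtains the fully explicit form of $\Gamma$ claimed in the statement.
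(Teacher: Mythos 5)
Your proposal is correct and takes essentially the same route as the paper: the paper's own argument (the $\Gamma$-part of the proof of Lemma~\ref{lem_karlin_as_corollary}) likewise deduces $\Gamma = \Gamma_{-1}$ from the already-established equality $\Omega = \Omega_{-1}$ via the transform relation~\eqref{first_relation} combined with the parametrization of $\Gamma_{-1}$ from Theorem~\ref{thm_pointwise_t_0}. The only superfluous step is your concavity verification: relation~\eqref{first_relation} holds unconditionally by a swap of suprema (it is~\eqref{second_relation} whose equality case needs concavity), though your concavity argument via~\eqref{second_relation_t} is itself correct and does no harm.
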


The key ingredient in proving above theorems is the conjecture on extremal properties of derivatives of eigen-functions $\varphi$ of operator $\mathcal{A}$ (see Subsection~\ref{ss_lemma_1}):
\begin{equation}
\label{extremal_eigenfunctions}
    \left\|\varphi^{(r+k)}\right\|_\infty = \left|\varphi^{(r+k)}(-1)\right|.
\end{equation}
Here $\varphi$ is non-zero function satisfying boundary value problem for some $\lambda > 0$
\begin{equation}
\label{sturm_liouville_bvp}
    \left\{\begin{array}{ll}
        (-1)^r\varphi^{(2r)}(x) = \lambda \varphi(x), & x\in(-1,1),\\
        \varphi^{(s)}(-1) = \varphi^{(s)}(1) = 0, & s=0,1,\ldots,r-1.
    \end{array}\right.
\end{equation}

Conjecture~\eqref{extremal_eigenfunctions} implies conjectures~\eqref{karlin_omega} and~\eqref{karlin_gamma}, as the following proposition indicates.

\begin{lemma}
\label{lem_karlin_as_corollary}
    If for some $r\in\mathbb{N}$ and $k\in\mathbb{Z}_+$, $k\leqslant r-1$, every eigen-function $\varphi$ of problem~\eqref{sturm_liouville_bvp} possesses property~\eqref{extremal_eigenfunctions}, then equalities~\eqref{karlin_omega} and~\eqref{karlin_gamma} hold true.
\end{lemma}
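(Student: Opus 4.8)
The plan is to reduce both conjectures to a single comparison between the \emph{curves} that describe the sets $\Gamma_t$ and $\Gamma_{-1}$. First observe that \eqref{karlin_gamma} follows from \eqref{karlin_omega}: relations \eqref{first_relation} and \eqref{first_relation_t} express $\Gamma$ and $\Gamma_{-1}$ as the Legendre transforms of $\Omega$ and $\Omega_{-1}$, so $\Omega\equiv\Omega_{-1}$ forces $B(A)=\sup_{\delta>0}(\Omega(\delta)-A\delta)=B_{-1}(A)$ and hence $\Gamma=\Gamma_{-1}$. For \eqref{karlin_omega} itself, since $\|f^{(k)}\|_\infty=\sup_{t\in\mathbb I}|f^{(k)}(t)|$ we have $\Omega(\delta)=\sup_{t\in\mathbb I}\Omega_t(\delta)$, and by the symmetry remark $\Omega_1=\Omega_{-1}$; thus it suffices to prove $\Omega_t(\delta)\leqslant\Omega_{-1}(\delta)$ for every $t\in(-1,1)$ and $\delta\geqslant 0$. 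By Theorems~\ref{thm_pointwise_t} and~\ref{thm_pointwise_t_0} together with the envelope identity \eqref{second_relation_t}, $\Omega_t(\delta)=\inf_{\lambda\geqslant 0}\bigl(\|u_{\lambda,t}^{(r)}\|_2\,\delta+\|u_{\lambda,t}\|_2\bigr)$ and likewise for $t=-1$. Hence it is enough to establish, for each fixed $\lambda\geqslant 0$, the two componentwise inequalities $\|u_{\lambda,t}\|_2\leqslant\|u_\lambda\|_2$ and $\|u_{\lambda,t}^{(r)}\|_2\leqslant\|u_\lambda^{(r)}\|_2$; taking the infimum over $\lambda$ of the dominated linear functional $\|u_{\lambda,t}^{(r)}\|_2\,\delta+\|u_{\lambda,t}\|_2$ then yields $\Omega_t(\delta)\leqslant\Omega_{-1}(\delta)$.

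The bridge connecting these norms to the hypothesis \eqref{extremal_eigenfunctions} is a formula for the Fourier coefficients of $u_{0,t}$ with respect to the eigenbasis $\Phi=\{\varphi_n\}$. I would compute it by writing $\varphi_n=(-1)^r\lambda_n^{-1}\varphi_n^{(2r)}$ and integrating $\int_{-1}^1 u_{0,t}\,\varphi_n^{(2r)}$ by parts $2r$ times on each of $(-1,t)$ and $(t,1)$. On each piece $u_{0,t}^{(2r)}=0$; the boundary terms at $\pm1$ all vanish because $u_{0,t}^{(j)}(\pm1)=0$ for $j\leqslant r-1$ while $\varphi_n^{(s)}(\pm1)=0$ for $s\leqslant r-1$; and at the interior point the only surviving contribution comes from the single jump $u_{0,t}^{(r-k-1)}(t^+)-u_{0,t}^{(r-k-1)}(t^-)=(-1)^{k-1}$ prescribed in \eqref{Sturm-Liouville_t}, paired with $\varphi_n^{(2r-1-(r-k-1))}=\varphi_n^{(r+k)}$. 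This gives precisely
\[
    \left|\left(u_{0,t},\varphi_n\right)\right| = \frac{\left|\varphi_n^{(r+k)}(t)\right|}{\lambda_n},\qquad n\in\mathbb N,\ t\in\mathbb I,
\]
the case $t=-1$ being recovered from the boundary data of problem~\eqref{Sturm-Liouville}. Now the assumed extremal property \eqref{extremal_eigenfunctions}, $\|\varphi_n^{(r+k)}\|_\infty=|\varphi_n^{(r+k)}(-1)|$, translates directly into the coefficient domination $|(u_{0,t},\varphi_n)|\leqslant|(u_{0,-1},\varphi_n)|=|(u_0,\varphi_n)|$ for every $n$ and every $t\in\mathbb I$.

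It remains to feed this into the series expressions $\|u_{\lambda,t}\|_2^2=\sum_n\lambda_n^2|(u_{0,t},\varphi_n)|^2(\lambda+\lambda_n)^{-2}$ and $\|u_{\lambda,t}^{(r)}\|_2^2=M_t^2+\sum_n\lambda^2\lambda_n|(u_{0,t},\varphi_n)|^2(\lambda+\lambda_n)^{-2}$ obtained in the proofs of Lemmas~\ref{pointwise_t_0_representation} and~\ref{pointwise_t_representation} (cf.\ \eqref{norm_equality}--\eqref{norm_derivative_equality}). Since every summand is monotone in $|(u_{0,t},\varphi_n)|^2$, the coefficient domination gives $\|u_{\lambda,t}\|_2\leqslant\|u_\lambda\|_2$ outright and controls the transcendental part of $\|u_{\lambda,t}^{(r)}\|_2$; the only remaining piece is the polynomial term $M_t^2=\|u_{0,t}^{(r)}\|_2^2$, so the desired second inequality follows once one knows $M_t\leqslant M_{-1}$. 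This is the endpoint extremality of the $L_2$ Markov--Nikolskii constant, which I would supply from the Legendre reproducing kernel via $M_t^2=\sum_{m=0}^{r-1}\bigl(P_m^{(k)}(t)\bigr)^2$ with $\{P_m\}$ the orthonormal Legendre polynomials, whose diagonal is maximal at $t=\pm1$; it is automatic in the cases $r\in\{1,2\}$ where one checks $M_t$ is constant or equals $\tfrac12+\tfrac32 t^2$. I expect the main obstacle to be exactly this interplay between the two contributions to $\|u_{\lambda,t}^{(r)}\|_2$: the eigenfunction hypothesis \eqref{extremal_eigenfunctions} governs only the series part through the coefficient identity above, so one must separately secure the polynomial bound $M_t\leqslant M_{-1}$ (a necessary condition for \eqref{karlin_omega} to hold at all, as the $\delta\to\infty$ asymptotics $\Omega_t(\delta)/\delta\to M_t$ show) and verify that the boundary-term bookkeeping in the $2r$-fold integration by parts leaves only the single prescribed jump.
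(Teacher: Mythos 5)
Your proposal is correct and takes essentially the same route as the paper's proof: the same integration-by-parts identity $\left(u_{0,t},\varphi_n\right) = -\varphi_n^{(r+k)}(t)/\lambda_n$, the same coefficient domination drawn from hypothesis~\eqref{extremal_eigenfunctions}, the same series representations combined with the envelope identity~\eqref{second_relation_t} to get $\Omega_t\leqslant\Omega_{-1}$, and a Legendre-transform passage from~\eqref{karlin_omega} to~\eqref{karlin_gamma}. The obstacle you correctly isolate --- that~\eqref{extremal_eigenfunctions} controls only the series part of $\left\|u_{\lambda,t}^{(r)}\right\|_2^2$, so the polynomial term requires $M_t\leqslant M_{-1}$ as a separate input --- is present in the paper's argument as well, where it is discharged by citing Labelle~\cite{Lab_69}; your reproducing-kernel justification is a legitimate substitute for that citation.
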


Theorems~\ref{thm_landau_kolmogorov} and~\ref{thm_uniform} follow immediately from Lemma~\ref{lem_karlin_as_corollary} and the next proposition.

\begin{lemma}
\label{lem_small_order}
    Let either $r = 1$ and $k = 0$ or $r=2$ and $k\in\{0,1\}$. Then every non-zero solution $\varphi$ to problem~\eqref{sturm_liouville_bvp} satisfies property~\eqref{extremal_eigenfunctions}.
\end{lemma}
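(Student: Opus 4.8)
The plan is to solve the eigenvalue problem \eqref{sturm_liouville_bvp} explicitly in the low-order cases and then verify \eqref{extremal_eigenfunctions} by direct analysis of the extremal derivative $\varphi^{(r+k)}$. First I would handle $r=1$, $k=0$: here \eqref{sturm_liouville_bvp} reduces to $-\varphi'' = \lambda\varphi$ with $\varphi(-1)=\varphi(1)=0$, whose eigenfunctions are $\varphi(x)=\sin\bigl(\tfrac{\pi n}{2}(x+1)\bigr)$ with $\lambda=(\pi n/2)^2$. Since $r+k=1$, I must compare $\|\varphi'\|_\infty$ with $|\varphi'(-1)|$. As $\varphi'(x)=\tfrac{\pi n}{2}\cos\bigl(\tfrac{\pi n}{2}(x+1)\bigr)$ attains its maximal modulus $\tfrac{\pi n}{2}$ exactly at $x=-1$ (and at the other points where the cosine reaches $\pm 1$), property \eqref{extremal_eigenfunctions} holds trivially. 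This case is essentially immediate.

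The substantive work is the case $r=2$, where \eqref{sturm_liouville_bvp} becomes $\varphi^{(4)}=\lambda\varphi$ with the four clamped-plate boundary conditions $\varphi(\pm 1)=\varphi'(\pm 1)=0$. Writing $\lambda=\mu^4$, the general solution is a combination of $\cos\mu x,\sin\mu x,\cosh\mu x,\sinh\mu x$, and the boundary conditions split the eigenfunctions into symmetric and antisymmetric families governed by the classical transcendental equations $\tan\mu+\tanh\mu=0$ and $\tan\mu-\tanh\mu=0$. For each family I would write $\varphi$ in closed form up to normalization, e.g. $\varphi_{\mathrm{sym}}(x)=\cosh\mu\,\cos\mu x-\cos\mu\,\cosh\mu x$ and the analogous antisymmetric combination. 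Then, for $k=0$ I must show $\|\varphi''\|_\infty=|\varphi''(-1)|$, and for $k=1$ I must show $\|\varphi'''\|_\infty=|\varphi'''(-1)|$. The plan is to compute $\varphi''$ and $\varphi'''$ explicitly and analyze their critical points on $[-1,1]$, using the eigenvalue relation to simplify the endpoint values, and comparing the magnitude at an interior critical point $x_0$ (where a further derivative vanishes) against the endpoint magnitude.

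The main obstacle I anticipate is the interior analysis for $r=2$: unlike the pure trigonometric case, the presence of the hyperbolic terms means $\varphi''$ and $\varphi'''$ are genuine combinations of bounded oscillatory and monotone-growing pieces, so one cannot simply invoke $|\cos|\le 1$. The key step will be to exploit the boundary conditions to show that the oscillatory ($\cos,\sin$) part and the hyperbolic ($\cosh,\sinh$) part reinforce at the endpoint $x=-1$ but partially cancel in the interior. Concretely, I would argue that between consecutive zeros of the relevant derivative the function is monotone or has a single interior extremum whose value is dominated by the endpoint value, reducing the claim to a finite set of sign/magnitude inequalities among $\sin\mu,\cos\mu,\sinh\mu,\cosh\mu$ that follow from the characteristic equation. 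I expect that pinning down these interior comparisons rigorously — rather than merely reading them off a graph as the introduction does for $r\ge 3$ — is where the real care is needed, and it is precisely why the lemma is restricted to $r\in\{1,2\}$.
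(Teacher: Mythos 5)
Your $r=1$ case is fine and coincides with the paper's. For $r=2$, however, your plan has a genuine gap precisely at the step you yourself flag as the main obstacle: the comparison of interior critical values of $\varphi^{(2+k)}$ with the endpoint value is the \emph{entire} content of the lemma, and your proposal only describes the hoped-for outcome (the oscillatory and hyperbolic parts ``reinforce at the endpoint but partially cancel in the interior,'' reduction to ``a finite set of sign/magnitude inequalities'') without producing those inequalities or showing how they follow from the characteristic equations $\tan\mu\pm\tanh\mu=0$. As written, nothing rules out an interior extremum of $\varphi''$ (for $k=0$) or $\varphi'''$ (for $k=1$) exceeding the endpoint value; you would have to carry out the estimates separately for the symmetric and antisymmetric families and for both values of $k$, and this bookkeeping with $\cos,\sin,\cosh,\sinh$ is exactly the part that is missing.

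The paper's proof avoids explicit eigenfunctions altogether and closes this gap with one integration by parts. Let $t_0\in(-1,1)$ be an interior extremum of $\varphi^{(2+k)}$, so $\varphi^{(3+k)}(t_0)=0$. Using the equation $\varphi^{(4)}=\lambda\varphi$ (hence $\varphi^{(4+k)}=\lambda\varphi^{(k)}$) and the boundary conditions $\varphi(\pm1)=\varphi'(\pm1)=0$, the boundary terms in the integration by parts vanish (for $k=0$ because $\varphi'(-1)=0$, for $k=1$ because $\varphi^{(4)}(-1)=\lambda\varphi(-1)=0$), and one gets
\begin{gather*}
\left(\varphi^{(2+k)}(t_0)\right)^2-\left(\varphi^{(2+k)}(-1)\right)^2
=2\int_{-1}^{t_0}\varphi^{(2+k)}(x)\varphi^{(3+k)}(x)\,{\rm d}x
=-2\int_{-1}^{t_0}\varphi^{(1+k)}(x)\varphi^{(4+k)}(x)\,{\rm d}x\\
=-2\lambda\int_{-1}^{t_0}\varphi^{(1+k)}(x)\varphi^{(k)}(x)\,{\rm d}x
=-\lambda\left(\varphi^{(k)}(t_0)\right)^2\leqslant 0,
\end{gather*}
where the last equality uses $\varphi^{(k)}(-1)=0$ for $k\in\{0,1\}$. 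Thus every interior extremum of $\varphi^{(2+k)}$ is dominated in modulus by $\left|\varphi^{(2+k)}(-1)\right|$, which is property~\eqref{extremal_eigenfunctions}. This identity is exactly the ``finite set of inequalities'' your plan was hoping to extract from the transcendental equations; once you have it, the explicit form of the eigenfunctions, the symmetric/antisymmetric splitting, and the characteristic equations become unnecessary, so if you complete your route you should either carry out the hyperbolic--trigonometric estimates in full or simply replace them by this argument.
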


\begin{figure}[ht]
\caption{Graphs of fourth order derivatives of the first six eigen-functions of operator $\mathcal{A}u = u^{(8)}$, case $r = 4$ and $k = 0$.}
\label{X_figure_4}
\centering
\includegraphics[width=1\textwidth]{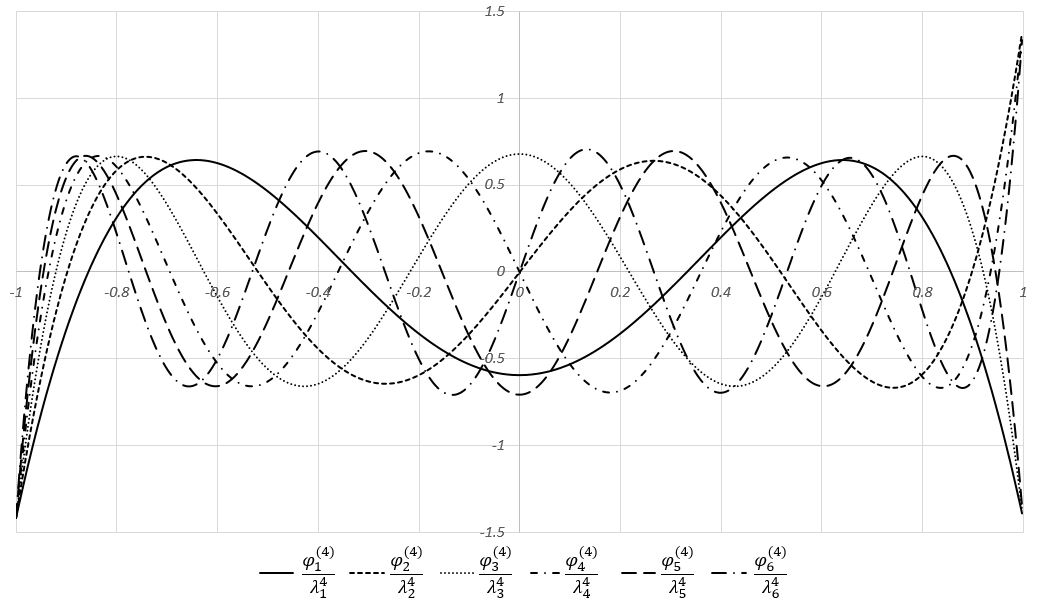}
\end{figure}

We suppose that Lemma~\ref{lem_small_order} also holds true for $r\geqslant 3$ (see {\it e.g.}, graphs of derivatives of first several eigen-functions of operator $\mathcal{A}u = u^{(8)}$), but the proof of this fact is unknown to us.

\begin{figure}[ht]
\caption{Graphs of sixth order derivatives of the first six eigen-functions of operator $\mathcal{A}u = u^{(8)}$, case $r = 4$ and $k = 2$.}
\label{X_figure_6}
\centering
\includegraphics[width=1\textwidth]{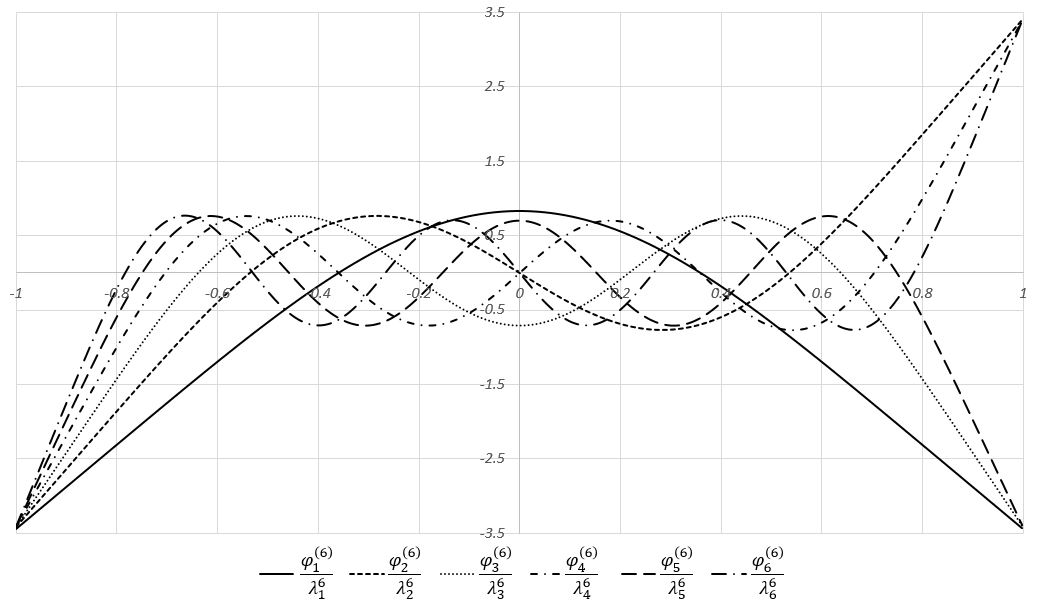}
\end{figure}

Theorem~\ref{thm_uniform} allows solving the Stechkin problem~\eqref{stechkin} for $r = 1$ and $r = 2$.

\begin{theorem}
\label{thm_stechkin}
    Let either $r = 1$ and $k = 0$ or $r = 1$ and $k\in\{0,1\}$. For $N\geqslant M$, let $\lambda_{N,-1} = \lambda_{N,1} = \lambda_N$, where $\lambda_N$ is defined in Theorem~\ref{stechkin_pointwise_t_0}, and let $\lambda_{N,t}$, $t\in(-1,1)$, be defined in Theorem~\ref{stechkin_pointwise_t}. Define operator $S_N:L_2\to L_\infty$:
    \[
        S_Nf(t) = S_{N,t}f = \int_{-1}^1 u_{\lambda_{N,t},t}(x)f(x)\,{\rm d}x,\qquad f\in L_2.
    \]
    Then, for $N \in (0,M)$, $E_N\left(D^k;W^r_2\right) = +\infty$, and, for $N \geqslant M = M_0$, 
    \[
        E_N\left(D^k;W^r_2\right) = U\left(D^k;S_N;W^r_2\right) = \left\|u_{\lambda_N}\right\|_2 = E_N\left(D^k_0;W^r_2\right).
    \]
\end{theorem}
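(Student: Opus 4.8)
The plan is to combine the pointwise Stechkin result (Theorem~\ref{stechkin_pointwise_t_0} together with Theorem~\ref{stechkin_pointwise_t}) with the uniform Kolmogorov-type bound guaranteed by Theorem~\ref{thm_uniform}. First I would dispose of the regime $N \in (0,M)$: since $M = M_0$ is the sharp Markov--Nikolskii constant and $B(N)$ is finite if and only if $N \geqslant M$, the lower bound~\eqref{stechkin_lower_estimate_corollary} forces $E_N(D^k;W^r_2) \geqslant B(N) = +\infty$ for $N < M$. Here I would use Theorem~\ref{thm_uniform}, which gives $\Gamma = \Gamma_{-1}$ and hence $B(N) = B_{-1}(N)$, so the finiteness threshold for the uniform problem is the same $M = M_{-1}$ as for the pointwise problem at $t=-1$.

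Next I would establish the lower bound for $N \geqslant M$. From~\eqref{stechkin_lower_estimate_corollary} we have $E_N(D^k;W^r_2) \geqslant B(N)$, and Theorem~\ref{thm_uniform} gives $B(N) = B_{-1}(N)$. By~\eqref{pointwise_stechkin_equality}, $B_{-1}(N) = E_N(D^k_{-1};W^r_2) = \left\|u_{\lambda_N}\right\|_2$ via Theorem~\ref{stechkin_pointwise_t_0}. This already pins down the value $\left\|u_{\lambda_N}\right\|_2$ as a lower bound and simultaneously identifies it with $E_N(D^k_0;W^r_2)$ once I note that under the symmetry $\Omega_0 = \Omega_{-1}$ claimed in the statement (using $M = M_0$), the pointwise problem at the relevant point yields the same quantity.

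For the matching upper bound I would estimate the error of the explicitly constructed operator $S_N$ directly. For fixed $t$, the functional $f \mapsto S_N f(t) = S_{N,t}f$ is exactly the extremal functional of Theorem~\ref{stechkin_pointwise_t} (or Theorem~\ref{stechkin_pointwise_t_0} at the endpoints), so following the integration-by-parts computation leading to~\eqref{pointwise_inequality_t} gives, for every $f \in W^r_2$ and every $t$,
\[
    \left|f^{(k)}(t) - S_{N,t}f\right| \leqslant \left\|u_{\lambda_{N,t},t}\right\|_2 \left\|f^{(r)}\right\|_2 \leqslant \left\|u_{\lambda_{N,t},t}\right\|_2.
\]
Taking the supremum over $t \in \mathbb{I}$ and over $f \in W^r_2$ yields $U(D^k;S_N;W^r_2) \leqslant \sup_{t}\left\|u_{\lambda_{N,t},t}\right\|_2$. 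The crux is therefore to show this supremum equals $\left\|u_{\lambda_N}\right\|_2$, i.e. that the norm $\left\|u_{\lambda_{N,t},t}\right\|_2$ of the residual kernel is maximized at $t = -1$.

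The main obstacle is precisely this last monotonicity-in-$t$ fact, and it is where conjecture~\eqref{extremal_eigenfunctions} enters. The key insight sketched in the paper's ``Main ideas'' subsection is that for a fixed parameter $\lambda$ one has $\left\|u_{\lambda,t}^{(r)}\right\|_2 \leqslant \left\|u_{\lambda,-1}^{(r)}\right\|_2$ and $\left\|u_{\lambda,t}\right\|_2 \leqslant \left\|u_{\lambda,-1}\right\|_2$, which follows from the extremal endpoint property of the eigenfunction derivatives $\varphi^{(r+k)}$; by Lemma~\ref{lem_small_order} this property holds for $r \in \{1,2\}$. I would leverage this by reparametrizing: for each $t$, $\lambda_{N,t}$ is chosen so that $\left\|u_{\lambda_{N,t},t}^{(r)}\right\|_2 = N$, and since at $t=-1$ the derivative norm is largest for fixed $\lambda$, attaining the common value $N$ at an interior $t$ requires a larger $\lambda_{N,t} \geqslant \lambda_N$, which by the decreasing dependence of $\left\|u_{\lambda,t}\right\|_2$ on $\lambda$ (Lemma~\ref{pointwise_t_representation}) combined with the endpoint domination forces $\left\|u_{\lambda_{N,t},t}\right\|_2 \leqslant \left\|u_{\lambda_N,-1}\right\|_2 = \left\|u_{\lambda_N}\right\|_2$. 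Assembling the lower bound $\left\|u_{\lambda_N}\right\|_2$, the upper bound $\sup_t\left\|u_{\lambda_{N,t},t}\right\|_2 = \left\|u_{\lambda_N}\right\|_2$, and the identification with $E_N(D^k_0;W^r_2)$ completes the chain of equalities.
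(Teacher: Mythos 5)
Your proposal is correct, and its skeleton coincides with the paper's proof: the case $N<M$ is handled via the finiteness threshold of $B(N)$; the lower bound $E_N\left(D^k;W^r_2\right)\geqslant\left\|u_{\lambda_N}\right\|_2$ comes from Stechkin's estimate~\eqref{stechkin_lower_estimate} combined with Theorem~\ref{thm_uniform}; and the upper bound is $E_N\left(D^k;W^r_2\right)\leqslant U\left(D^k;S_N;W^r_2\right)=\sup_{t\in\mathbb{I}}U\left(D^k_t;S_{N,t};W^r_2\right)=\sup_{t\in\mathbb{I}}\left\|u_{\lambda_{N,t},t}\right\|_2$. Where you genuinely diverge is the crux identity $\sup_{t\in\mathbb{I}}\left\|u_{\lambda_{N,t},t}\right\|_2=\left\|u_{\lambda_N}\right\|_2$. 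The paper obtains it by quoting its own accumulated results: $\left\|u_{\lambda_{N,t},t}\right\|_2=E_N\left(D^k_t;W^r_2\right)=B_t(N)\leqslant B(N)=B_{-1}(N)=\left\|u_{\lambda_N}\right\|_2$, the last equalities by Theorems~\ref{thm_uniform} and~\ref{stechkin_pointwise_t_0}. You instead re-derive it at a lower level: from the fixed-$\lambda$ endpoint domination $\left\|u^{(r)}_{\lambda,t}\right\|_2\leqslant\left\|u^{(r)}_{\lambda}\right\|_2$ and $\left\|u_{\lambda,t}\right\|_2\leqslant\left\|u_{\lambda}\right\|_2$ (consequences of~\eqref{extremal_eigenfunctions}, available for $r\in\{1,2\}$ by Lemma~\ref{lem_small_order}), the increase of $\left\|u^{(r)}_{\lambda,t}\right\|_2$ in $\lambda$ gives $\lambda_{N,t}\geqslant\lambda_N$, and then the decrease of $\left\|u_{\lambda,t}\right\|_2$ in $\lambda$ gives $\left\|u_{\lambda_{N,t},t}\right\|_2\leqslant\left\|u_{\lambda_N,t}\right\|_2\leqslant\left\|u_{\lambda_N}\right\|_2$. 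This reparametrization argument is sound and has the merit of showing explicitly how the Karlin-type property drives the result; its cost is that the domination inequalities are not a stated lemma but an internal step of the proof of Lemma~\ref{lem_karlin_as_corollary}, so you are essentially re-proving what Theorem~\ref{thm_uniform} already packages, whereas the paper's citation-based route is shorter. Two reading choices you made are the right ones: $S_{N,t}$ must integrate against $u^{(r)}_{\lambda_{N,t},t}$ (the statement of Theorem~\ref{thm_stechkin} drops the superscript $(r)$), and the subscript $0$ in $M_0$ and $E_N\left(D^k_0;W^r_2\right)$ refers to the endpoint case of Section~\ref{pointwise_t_0}, not to the interior point $t=0$, for which the final equality would in general fail.
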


For $r\geqslant 3$, conjectures~\eqref{karlin_omega} and~\eqref{karlin_gamma} remain open. Nevertheless, the following partial result can be established.

\begin{theorem}
\label{thm_additive_polynomial}
    Let $r=3,4,\ldots$ and either $k=r-2$ or $k=r-1$. Then, for every $f\in L^r_2$, there holds sharp inequality
    \[
        \left\|f^{(k)}\right\|_\infty \leqslant \left\|u_{0}^{(r)}\right\|_2\|f\|_2 + \left\|u_0\right\|_2\left\|f^{(r)}\right\|_2.
    \]
\end{theorem}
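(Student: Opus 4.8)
The plan is to obtain the uniform inequality from the pointwise inequalities of Sections~\ref{pointwise_t_0}--\ref{pointwise_t_any} evaluated at $\lambda = 0$, and then to read off sharpness from the already-settled endpoint case $t = -1$. Specialising~\eqref{pointwise_inequality} and~\eqref{pointwise_inequality_t} to $\lambda = 0$ yields, for every $t\in\mathbb{I}$ and every $f\in L_2^r$,
\[
    \left|f^{(k)}(t)\right| \leqslant M_t\|f\|_2 + \left\|u_{0,t}\right\|_2\left\|f^{(r)}\right\|_2,
\]
since $\left\|u_{0,t}^{(r)}\right\|_2 = M_t$. Taking the supremum over $t$, the theorem reduces to the two comparisons $M_t\leqslant M_{-1}=\left\|u_0^{(r)}\right\|_2$ and $\left\|u_{0,t}\right\|_2\leqslant\left\|u_0\right\|_2$, valid for all $t\in\mathbb{I}$.

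The first comparison is elementary and is exactly where the restriction $k\in\{r-2,r-1\}$ is used. For $Q\in\mathcal{P}_{r-1}$ the derivative $Q^{(r-1)}$ is constant and $Q^{(r-2)}$ is affine, so for fixed $Q$ the quantity $\left|Q^{(k)}(t)\right|$ attains its maximum over $t\in\mathbb{I}$ at an endpoint; combined with the reflection symmetry $M_1 = M_{-1}$ this gives $M_t\leqslant M_{-1}$ for all $t$ (with equality throughout when $k=r-1$). In the language of the introduction, the minimal admissible constant $A$ equals the uniform Markov--Nikolskii constant $M = \sup_t M_t$, and for these two values of $k$ this collapses to $M_{-1}=\left\|u_0^{(r)}\right\|_2$.

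The heart of the matter, and the step I expect to be the main obstacle, is the inequality $\left\|u_{0,t}\right\|_2\leqslant\left\|u_0\right\|_2$. Here I would exploit that $u_{0,t}^{(r)}\in\mathcal{P}_{r-1}$ is precisely the Markov--Nikolskii extremal polynomial at $t$, so that $u_{0,t}$ is an explicit piecewise polynomial of degree at most $2r-1$ on $(-1,t)\cup(t,1)$ whose two pieces, by the matching conditions in~\eqref{Sturm-Liouville_t}, differ only by the constant $c:=(-1)^{k-1}$ when $k=r-1$ and by the affine function $c(x-t)$ when $k=r-2$. Consequently $\left\|u_{0,t}\right\|_2^2$ is a polynomial in $t$, and the reflection $x\mapsto -x$ shows $\left\|u_{0,t}\right\|_2 = \left\|u_{0,-t}\right\|_2$, so this polynomial is even and it suffices to prove it is nonincreasing on $[-1,0]$. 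For $k=r-1$ a direct computation (using $u_0 = u_{0,-1}$ on the piece $(t,1)$) reduces the claim to the sign condition
\[
    g(t) := c\left(2\int_{-1}^t u_0(x)\,{\rm d}x - c(1+t)\right)\geqslant 0,\qquad t\in\mathbb{I},
\]
with $g(\pm 1) = 0$ and $g'(t) = 2c\left(u_0(t)-\tfrac{c}{2}\right)$; I would establish this by locating the sign changes of $u_0$ on $\mathbb{I}$ from the boundary data $u_0^{(s)}(\pm 1)$ via Rolle's theorem, showing that $g'$ has a single interior zero and hence that $g$ is unimodal. The case $k=r-2$ is analogous, with an additional affine contribution from the term $c(x-t)$. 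Controlling the interior critical points of this $t$-polynomial, equivalently ruling out oscillation of the norm $\left\|u_{0,t}\right\|_2$ as the jump point $t$ moves, is the delicate point of the whole argument.

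Sharpness then comes for free. Since $\left\|f^{(k)}\right\|_\infty\geqslant\left|f^{(k)}(-1)\right|$, the smallest constant $B$ compatible with $A = M_{-1}$ obeys $B\geqslant \sup_{f\in W^r_2}\left(\left|f^{(k)}(-1)\right| - M_{-1}\|f\|_2\right)$, which by Theorem~\ref{thm_pointwise_t_0} equals $\left\|u_0\right\|_2$, the pair $\left(\left\|u_0^{(r)}\right\|_2,\left\|u_0\right\|_2\right)$ being the $\lambda = 0$ vertex of $\Gamma_{-1}$. The displayed inequality supplies the matching upper bound $B\leqslant\left\|u_0\right\|_2$, whence $B = \left\|u_0\right\|_2$ and the inequality is sharp with the least possible $A$, as asserted.
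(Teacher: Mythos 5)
Your overall reduction is exactly the paper's: specialise the pointwise inequalities \eqref{pointwise_inequality} and \eqref{pointwise_inequality_t} to $\lambda=0$, prove the two comparisons $M_t\leqslant M_{-1}$ and $\left\|u_{0,t}\right\|_2\leqslant\left\|u_0\right\|_2$, and read off sharpness from the $\lambda=0$ pair of $\Gamma_{-1}$ in Theorem~\ref{thm_pointwise_t_0}. Your treatment of $k=r-1$ is sound, though phrased differently from the paper's: in that case the piece of $u_{0,t}$ on $(t,1)$ really is $u_0$ (both are polynomials of degree at most $2r-1$ satisfying the same $2r$ Hermite conditions), so $u_{0,t}=u_0-c\chi_{[-1,t)}$ with $c=(-1)^r$, and $\left\|u_0\right\|_2^2-\left\|u_{0,t}\right\|_2^2=2c\int_{-1}^t u_0(x)\,{\rm d}x-(1+t)=g(t)$. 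Your unimodality argument then closes this case, because $cu_0$ decreases from $1$ to $0$ (its derivative is $-\gamma^{-1}(1-x^2)^{r-1}$) and $\int_{-1}^1 u_0(x)\,{\rm d}x=c$, which gives $g(\pm1)=0$; the paper instead uses the pointwise domination $|p_t|\leqslant|p_{-1}|$ on $[-1,t)$ for $t\leqslant 0$ plus symmetry, but both are valid.

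The genuine gap is the case $k=r-2$, which you dismiss as ``analogous, with an additional affine contribution.'' It is not analogous, because the structural fact your $k=r-1$ computation rests on fails: for $k=r-2$ the piece $P$ of $u_{0,t}$ on $(t,1)$ is \emph{not} $u_0$. The jump conditions in \eqref{Sturm-Liouville_t} force $P(-1)=-c(1+t)$, $P'(-1)=c$, $P^{(s)}(-1)=0$ for $2\leqslant s\leqslant r-1$, whereas $u_0(-1)=0$; hence $P=u_0-c(1+t)\,q$, where $q$ is the fixed polynomial of degree $2r-1$ with $q(-1)=1$, $q^{(s)}(-1)=0$ ($1\leqslant s\leqslant r-1$), $q^{(s)}(1)=0$ ($0\leqslant s\leqslant r-1$). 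Consequently $\left\|u_{0,t}\right\|_2^2-\left\|u_0\right\|_2^2$ contains, besides terms supported on $[-1,t]$, the global terms $-2c(1+t)\left(u_0,q\right)+(1+t)^2\|q\|_2^2$ and cross terms whose signs are not evident, so the difference is no longer controlled by the values of one fixed function along $[-1,t]$ and no single-critical-point argument of the type you sketch is available. This is precisely why the paper's proof of this case occupies most of Theorem~\ref{thm_additive_polynomial}'s proof: it decomposes $p_t=\frac{(1-t)p_{-1}+(1+t)p_1+\delta_t}{2}$, proves $p_t(t)<0$ and the barrier bounds $p_t(x)\geqslant p_t(t)\frac{1+x}{1+t}$ (and its mirror) by counting zeros of $p_t''$ against its degree $2r-3$, and finally verifies the polynomial inequality $2p_{-1}(t)+2p_1(t)\geqslant 1-t^2$; the triangle inequality then yields $\left\|p_t\right\|_2\leqslant\left\|p_{-1}\right\|_2$. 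Without an argument of comparable substance your proposal proves the theorem only for $k=r-1$. A smaller inaccuracy: the restriction $k\in\{r-2,r-1\}$ is not ``exactly where'' the comparison $M_t\leqslant M_{-1}$ needs it --- that comparison holds for every $k\leqslant r-1$ by Labelle's theorem, which the paper cites --- rather, the restriction is what makes the norm comparison $\left\|u_{0,t}\right\|_2\leqslant\left\|u_0\right\|_2$ tractable.
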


\subsection{The proof of main results of Section~\ref{uniform}}

\begin{proof}[The proof of Lemma~\ref{lem_karlin_as_corollary}]
    Assume that every eigen-function $\varphi_n$, $n\in\mathbb{N}$, of operator $\mathcal{A}$ satisfies property~\eqref{extremal_eigenfunctions}. First, we show that conjecture~\eqref{karlin_omega} holds true. Integrating by parts and taking into account definition of function $u_{0,t}$ (see Section~\ref{pointwise_t_any}), for $t\in (-1,1)$, we obtain
    \begin{gather*}
        \left(u_{0,t},\varphi_n\right) = \int_{-1}^1 u_{0,t}(x)\varphi_n(x)\,{\rm d}x = \frac{(-1)^r}{\lambda_n} \int_{-1}^1 u_{0,t}(x)\varphi^{(2r)}_n(x)\,{\rm d}x
        = -\frac{\varphi_n^{(r+k)}(t)}{\lambda_n}.
    \end{gather*}
    Similarly, $\varphi_n^{(r+k)}(-1) = -\lambda_n\left(u_0,\varphi_n\right)$, where $u_0$ was defined in Section~\ref{pointwise_t_0}. Hence, $\left|\left(u_{0,t},\varphi_n\right)\right|\leqslant \left|\left(u_{0},\varphi_n\right)\right|$ and by series representation of the norms of functions $u_{\lambda,t}$, $u^{(r)}_{\lambda,t}$, $u_\lambda$ and $u_\lambda^{(r)}$ (see proof of Lemma~\ref{pointwise_t_representation} and Lemma~\ref{pointwise_t_0_representation}), for every $\lambda > 0$, 
    \[
        \left\|u_{\lambda,t}^{(r)}\right\|_2 \leqslant \left\|u_{\lambda}^{(r)}\right\|_2\qquad\text{and}\qquad \left\|u_{\lambda,t}\right\|_2 \leqslant \left\|u_{\lambda}\right\|_2.
    \]
    Then by relation~\eqref{second_relation_t}, for every $\delta \geqslant 0$,
    \[
        \Omega_t(\delta) = \inf\limits_{\lambda \geqslant 0}\left(\left\|u_{\lambda,t}^{(r)}\right\|_2\delta + \left\|u_{\lambda,t}\right\|_2\right) \leqslant \inf\limits_{\lambda\geqslant 0}\left(\left\|u_{\lambda}^{(r)}\right\|_2\delta + \left\|u_{\lambda}\right\|_2\right) = \Omega_{-1}(\delta).
    \]
    As a result, $\Omega(\delta) = \Omega_{-1}(\delta)$ and relation~\eqref{karlin_omega} holds true.
    
    Now, let us show that conjecture~\eqref{karlin_gamma} holds true as well. Let $t\in(-1,1)$. It was proved by G.~Labelle~\cite{Lab_69} that $M = M_0$. Hence, for every $A\geqslant M$, there exist $\lambda \geqslant 0$ such that $\left\|u_{\lambda}^{(r)}\right\|_2 = A$. Then by relations~\eqref{first_relation_t},~\eqref{karlin_omega} and Theorem~\ref{thm_pointwise_t_0}, 
    \[
        B(A) = \sup\limits_{\delta > 0}\left(\Omega(\delta) - A\delta\right) = \sup\limits_{\delta > 0}\left(\Omega_{-1}(\delta) - A\delta\right) = \left\|u_{\lambda}\right\|_2.
    \]
    Therefore, $\Gamma = \Gamma_0$ and conjecture~\eqref{karlin_gamma} is proved.
\end{proof}

\begin{proof}[The proof of Lemma~\ref{lem_small_order}]
    Let $r = 1$ and $k=0$. Then, for $n\in\mathbb{N}$, $\lambda_n = \frac{\pi^2 n^2}{4}$ and, for $m\in\mathbb{N}$, $\varphi_{2m-1}(x) = \cos{\left(\pi m-\frac{\pi}2\right)x}$ and $\varphi_{2m}(x) = \sin{\pi m x}$, $x\in\mathbb{I}$. Clearly, the function $\varphi_n^{(r+k)}(x) = \varphi_n'(x)$ attains its extremal value on $\mathbb{I}$ at the endpoints, which proves~\eqref{extremal_eigenfunctions}.
    
    Let $r = 2$ and $k\in\{0,1\}$. Assume $\varphi$ satisfies boundary value problem~\eqref{sturm_liouville_bvp} with some $\lambda > 0$, and $t_0\in(-1,1)$ is the extremum of $\varphi^{(2+k)}$. Clearly, $\varphi^{(3+k)}(t_0) = 0$. Then integrating by parts we have
    \begin{gather*}
        \left(\varphi^{(2+k)}(t_0)\right)^2 - \left(\varphi^{(2+k)}(-1)\right)^2 = 2 \int_{-1}^{t_0}\varphi^{(2+k)}(x)\varphi^{(3+k)}(x)\,{\rm d}x \\
        = -2 \int_{-1}^{t_0} \varphi^{(1+k)}(x)\varphi^{(4+k)}(x)\,{\rm d}x = -2\lambda \int_{-1}^{t_0}\varphi^{(1+k)}(x)\varphi^{(k)}(x)\,{\rm d}x \\
        = -\lambda \left(\left(\varphi^{(k)}(t_0)\right)^2 - \left(\varphi^{(k)}(-1)\right)^2\right) = -\lambda \left(\varphi^{(k)}(t_0)\right)^2.
    \end{gather*}
    Hence, 
    \[
        \left(\varphi^{(2+k)}(-1)\right)^2 = \left(\varphi^{(2+k)}(t_0)\right)^2 + \lambda \left(\varphi^{(k)}(t_0)\right)^2 \geqslant \left(\varphi^{(2+k)}(t_0)\right)^2,
    \]
    and extremums of $\varphi^{(2+k)}$ inside $\mathbb{I}$ do not exceed in magnitute its values at the end-points of $\mathbb{I}$, which finishes the proof.
\end{proof}



\begin{proof}[The proof of Theorem~\ref{thm_stechkin}]
    Clearly, operator $S_N$ is well-defined for $N \geqslant M$. By Theorems~\ref{stechkin_pointwise_t_0},~\ref{stechkin_pointwise_t} and~\ref{thm_uniform}, we have
    \begin{gather*}
        E_N\left(D^k;W^r_2\right) \leqslant U\left(D^k;S_N;W^r_2\right) = \sup\limits_{t\in\mathbb{I}} U\left(D^k_t;S_{N,t};W^r_2\right) \\ 
        = \sup\limits_{t\in\mathbb{I}} \left\|u_{\lambda_{N,t},t}\right\|_2 = \left\|u_{\lambda_{N}}\right\|_2.
    \end{gather*}
    In turn, by inequality~\eqref{stechkin_lower_estimate} and Theorem~\ref{thm_uniform}, $E_N\left(D^k;W^r_2\right)\geqslant \left\|u_{\lambda_N}\right\|_2$.
\end{proof}

\begin{proof}[The proof of Theorem~\ref{thm_additive_polynomial}]
    Let $t\in (-1,1)$ and the functions $u_\lambda$ and $u_{\lambda,t}$ be defined in Section~\ref{pointwise_t_0} and Section~\ref{pointwise_t_any}, respectively. Since $\left\|u_0^{(r)}\right\|_2 = M_0 \geqslant M_t = \left\|u^{(r)}_{0,t}\right\|_2$ (see~\cite{Lab_69}) and by Theorems~\ref{thm_pointwise_t_0} and~\ref{thm_pointwise_t}, it is sufficient to prove that $\left\|u_{0,t}\right\|_2 \leqslant \left\|u_0\right\|_2$. Also, due to symmetrical considerations, it is enough to consider the case $t\in(-1,0]$.
    
    First, we consider the case $k = r - 1$. Set $p_{-1} := (-1)^r u_0$ and $p_t := (-1)^r u_{0,t}$. By~\eqref{Sturm-Liouville} it is clear that 
    \[
        p_{-1}(\cdot) = 1 - \frac{1}{\gamma}\int_{-1}^{(\cdot)} \left(1-\tau^2\right)^{r-1}\,{\rm d}\tau,\qquad \gamma = \int_{-1}^1\left(1 - \tau^2\right)^{r-1}\,{\rm d}\tau.
    \]
    By~\eqref{Sturm-Liouville_t} $p_t(x) = p_{-1}(x) - \chi_{[-1,t)}$, where $\chi_{E}$ is the characteristic (indicator) function of measurable set $E\subset\mathbb{I}$. Since $t\leqslant 0$, for every $x\in [-1,t)$,
    \[
        p_{-1}(x) \geqslant \frac{1}{2} \geqslant 1 - p_{-1}(x) = -p_t(x) = \left|p_t(x)\right|,
    \]
    and $p_{-1}(x) = p_t(x)$, for every $x\in[t,1]$. Hence, $\left\|p_{-1}\right\|_2 \leqslant \left\|p_t\right\|_2$, which finishes the proof in this case.
    
    Consider the case $r\geqslant 2$ and $k = r-2$. Denote $p_{-1}(x) = (-1)^{r-1}u_{0}(x)$, $p_1(x) = p_{-1}(-x)$ and $p_t(x) = (-1)^{r-1} u_{0,t}(x)$, $x\in\mathbb{I}$. Recall that from the definition of functions $u_0$ and $u_{0,t}$, it follows that $p_{-1}\in\mathcal{P}_{2r-1}$, $p_{t}\in\mathcal{P}_{2r-1}(-1,t)\cap\mathcal{P}_{2r-1}(t,1)$ and
    \[
        p_{-1}^{(s)}(-1) = \delta_{1,s},\quad p_{-1}^{(s)}(1) = 0,\quad s = 0,1,\ldots, r-1,
    \]
    and
    \[
        \left\{\begin{array}{ll}
            p^{(s)}_t(-1) = p^{(s)}_t(1) = 0,&  s=0,1,\ldots,r-1,\\
            p^{(s)}_t\left(t^+\right) - p_t^{(s)}\left(t^{-1}\right) = \delta_{1,s}, & s=0,1,\ldots,2r-1.
        \end{array}\right.
    \]
    Straightforward calculations show that
    \[
        p_t(x) = \frac{(1-t)\cdot p_{-1}(x) + (1+t)\cdot p_1(x) + \delta_t(x)}{2},\qquad x\in\mathbb{I},
    \]
    where
    \[
        \delta_t(x) = \left\{\begin{array}{ll}
            -(1-t)(1+x), & x\in[-1,t],\\
            -(1+t)(1-x), & x\in[t,1].
        \end{array}\right.
    \]
    Clearly, $p_t \leqslant \frac{1-t}{2}\cdot p_{-1} + \frac{1+t}{2}\cdot p_1$ as $\delta_t\leqslant 0$. Let us show that 
    \[
        p_t(x) \geqslant -\frac{1-t}{2}\cdot p_{-1}(x) - \frac{1+t}{2}\cdot p_1(x),\qquad x\in\mathbb{I},
    \]
    or, equivalently,
    \begin{equation}
    \label{main_inequality}
        4p_t(x)\geqslant \delta_t(x),\qquad x\in\mathbb{I}.
    \end{equation}
    
    First, we show that $p_t(t) < 0$. Assume to the contrary that $p_t(t) \geqslant 0$. Then there exist points $-1 < \xi_1 < t < \xi_2 < 1$ such that $p_t'\left(\xi_1\right) \geqslant 0$ and $p_t'\left(\xi_2\right) \leqslant 0$. If $p_t'\left(t^-\right) < 0$ then there exist three points $-1 < \tau_1 < \xi_1 < \tau_2 < t < \xi_2 < \tau_2 < 1$ such that $p_t''\left(\tau_1\right)\geqslant 0$, $p_t''\left(\tau_2\right)\leqslant 0$ and $p_t''\left(\tau_1\right)\geqslant 0$. Hence, the continuous extension of $p_t''$ on the interval $\mathbb{I}$ (also denoted as $p_t''$) has at least $2$ zeros inside $\mathbb{I}$, and at least $2(r-2) + 2 = 2r-2$ zeros (counting multiplicities) on the interval $\mathbb{I}$. However, $p_t''$ is the algebraic polynomial of degree at most $2r-3$ and cannot have $2r-2$ or more zeros. Next, if $p_t'\left(t^-\right) \geqslant 0$ then $p_t'\left(t^+\right) = 1 + p_t'\left(t^-\right) > 0$. Hence, there exist three points $-1 < \tau_1 < \xi_1 < t < \tau_2 < \xi_2 < \tau_2 < 1$ such that $p_t''\left(\tau_1\right)\geqslant 0$, $p_t''\left(\tau_2\right)\leqslant 0$ and $p_t''\left(\tau_1\right)\geqslant 0$, and following previous arguments we arrive to contradiction.
    
    Next, we observe that $p_t(x) \geqslant p_t(t)\cdot\frac{1+x}{1+t}$, $x\in[-1,t]$. Indeed, assume to the contrary that there exists a point $\xi\in(-1,t)$ such that $p_t(\xi) = p_t(t)\cdot \frac{1+\xi}{1+t}$. Since $p_t'(-1) = 0$, there exist $2$ points $-1 < \xi_1 < \xi < \xi_2 < t$ such that $p_t''\left(\xi_1\right) < 0$ and $p_t''\left(\xi_2\right) > 0$. Also, since $p_t(t) < 0$, there exists a point $\xi_3 \in (t,1)$ such that $p_t''(\xi_3) < 0$. Hence, $p_t''$ has at least $2(r-2) + 2 = 2r-2$ zeros (counting multiplicities), which contradicts to the fact that $p_t''$ is a polynomial of degree at most $2r-3$. Using similar arguments we can also prove that $p_t(x)\geqslant p_t(t)\cdot\frac{1-x}{1-t}$, $x\in[t,1]$. 
    
    Based on the above, is sufficient to prove inequality~\eqref{main_inequality} only in the case $x = t$, in which case~\eqref{main_inequality} can be rewritten as
    \begin{equation}
    \label{main_inequality_2}
        2(1-t)\cdot p_{-1}(t) + 2(1+t)\cdot p_1(t) \geqslant -\delta_t(t) = 1 - t^2.
    \end{equation}
    
    Evidently, $p_{-1}(x) \geqslant p_1(x)$ on $\left[-1, 0\right]$ and $p_{-1}(x) \leqslant p_1(x)$ on $\left[0, 1\right]$. Hence, $2(1-t)\cdot p_{-1}(t) + 2(1+t)\cdot p_1(t) \geqslant 2p_{-1}(t) + 2p_1(t)$. So, to prove inequality~\eqref{main_inequality_2} it is sufficient to show that
    \begin{equation}
    \label{main_inequality_3}
        2p_{-1}(t) + 2p_1(t)\geqslant 1- t^2.
    \end{equation}
    Clearly, the function $f(x) = 2p_{-1}(x) + 2p_1(x)$ is even polynomial of degree at most $2r-2$ such that $f(-1) = f(1) = 0$, $f'(-1) = 2$, $f'(1) = -2$ and $f''(x) = -\frac{4}{\gamma} \left(1-x^2\right)^{r-2}$ where $\gamma = \int_{-1}^1\left(1-x^2\right)^{r-2}\,{\rm d}x$. Then inequality~\eqref{main_inequality_3} turns into equality for $r=2$. For $r \geqslant 3$, the difference $g(x) = f(x) - 1 + x^2$ has the following properties: $g(-1) = g(1) = g'(-1) = g'(1) = 0$ and $g''(x)$ is even, has only one zero on $(-1,0)$, and $g''(-1) > 0$. Hence, inequality~\eqref{main_inequality_3} follows, which finishes the proof of Theorem~\ref{thm_additive_polynomial}.
\end{proof}

\end{document}